\documentclass[12pt,reqno]{amsart}

\usepackage{a4wide}
\usepackage{needspace}
\usepackage{hyperref}
\usepackage[inline]{enumitem}

\usepackage{tikz}
\usetikzlibrary{decorations.pathreplacing, tikzmark}
\usetikzlibrary{matrix,fit,positioning,calc}
\usetikzlibrary{positioning}
\usepackage{amsmath}

\newtheorem{theorem}[subsection]{Theorem}
\newtheorem{lemma}[subsection]{Lemma}

\newtheorem{proposition}[subsection]{Proposition}

\newtheorem*{claim*}{Claim}

\providecommand{\Z}{\mathbb{Z}}
\providecommand{\N}{\mathbb{N}}
\providecommand{\C}{\mathbb{C}}
\providecommand{\R}{\mathbb{R}}
\providecommand{\Q}{\mathbb{Q}}
\providecommand{\F}{\mathbb{F}}
\providecommand{\E}{\mathop{\mathbb{E}}}

\providecommand{\wh}{\widehat}
\providecommand{\wt}{\widetilde}
\newcommand{\dd}{\,\mathrm{d}}
\providecommand{\Span}{\mathop{\rm Span}\nolimits}

\providecommand{\supp}{\mathop{\rm supp}\nolimits}
\renewcommand{\Re}{\mathop{\rm Re}\nolimits}

\newcounter{constcntbig} 
\newcounter{constcntlittle} 

\makeatletter
\newcommand{\newconstbig}[1]{%
    \refstepcounter{constcntbig}%
    \hypertarget{const:#1}{\mbox{}}%
    \protected@write\@auxout{}%
        {\string\newlabel{const:#1}{{\arabic{constcntbig}}{\thepage}}}%
    C_{\arabic{constcntbig}}%
}
\makeatother
\makeatletter
\newcommand{\newconstlittle}[1]{%
    \refstepcounter{constcntlittle}%
    \hypertarget{const:#1}{\mbox{}}%
    \protected@write\@auxout{}%
        {\string\newlabel{const:#1}{{\arabic{constcntlittle}}{\thepage}}}%
    c_{\arabic{constcntlittle}}%
}
\makeatother

\newcommand{\refconstlittle}[1]{%
    \hyperlink{const:#1}{c_{\ref{const:#1}}}%
}
\newcommand{\refconstbig}[1]{%
    \hyperlink{const:#1}{C_{\ref{const:#1}}}%
}

\numberwithin{equation}{section}

\begin{document}

\title{Modular Schur numbers}

\author{Tom Sanders}
\address{Mathematical Institute\\
University of Oxford\\
Radcliffe Observatory Quarter\\
Woodstock Road\\
Oxford OX2 6GG\\
United Kingdom}
\email{tom.sanders@maths.ox.ac.uk}
\begin{abstract}
We study modular analogues of Schur numbers for systems of linear equations. We show that these only depend on the number of equations, not their coefficients and in the case of one equation show stronger bounds.
\end{abstract}

\maketitle

\section{Introduction}

The Schur number, $f(r)$, is the largest natural number $N$ such that there is an $r$-colouring of $[N]:=\{1,\dots,N\}$ without a monochromatic solution to $x+y=z$. This was shown to exist by Schur in \cite{sch::4}. There is a modular analogue $h(r)$ which is the same but instead of asking for no monochromatic solutions to $x+y=z$ it asks for none to $x+y \equiv z \pmod {N+1}$. We look modulo $N+1$ here so that $[N]$ is a set of non-zero residues.

It is immediate that $h(r)\leq f(r)$ for all $r \in \N^*$ and as a result of work of Abbott and Wang \cite{Abbott:1977aa} and Heule \cite{heu::} we know there is equality for $r\in \{1,2,3,4,5\}$; in \cite[Problem I, p12]{Abbott:1977aa} Abbott and Wang conjectured that there is equality for all $r$.

Schur's result is generalised by Rado's theorem. To state Rado's theorem recall that an $n \times m$ matrix $A$ satisfies the columns condition over a field $\F$ if there is a partition $\mathcal{P}=\{P_1,\dots,P_d\}$ of the columns of $A$ such that
\begin{equation*}
\sum_{c \in P_j}{c} \in \Span_\F(\{c: c \in P_1\cup \dots\cup P_{j-1}\})\text{ for all }1\leq j \leq d
\end{equation*}
with the convention that this span is the zero space for $j=1$.
\begin{theorem}[Rado's theorem, {\cite[Satz IV, p445]{rad::1}}]
Suppose that $A$ is an $n \times m$ matrix of integers that satisfies the columns condition over $\Q$ and $[N]$ is $r$-coloured such that there is no monochromatic solution to $Ax=0$. Then $N=O_{A,r}(1)$.
\end{theorem}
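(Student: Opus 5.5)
The plan is to follow Rado's original route: reduce, via the columns condition, to finding a monochromatic configuration of a special shape, and then produce that configuration with van der Waerden's theorem (in the form due to Deuber). First I fix the partition $P_1,\dots,P_d$ of the columns given by the columns condition. Clearing denominators, I choose rationals $\lambda_{j,i}$, $i\in P_1\cup\dots\cup P_{j-1}$, with $\sum_{c\in P_j}c=\sum_{i<P_j}\lambda_{j,i}c_i$. I then look for solutions of the form
\[
x_i = s\,y_{j} + \sum_{k<j} \mu_{i,k}\, y_k \quad\text{for } i\in P_j,
\]
where $y_1,\dots,y_d$ are parameters and the $\mu$'s are fixed rationals depending only on $A$, solved from the $\lambda$'s. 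Because $\sum_{c\in P_j}c$ lies in the span of earlier columns, the coefficient of $y_j$ in $Ax$ cancels after a suitable choice of the $\mu$'s. The upshot is a finite family of linear forms with bounded coefficients such that any choice of $s,y_1,\dots,y_d$ making all $x_i$ positive integers gives a solution of $Ax=0$. Scaling by a common denominator $D$ (depending only on $A$) keeps everything integral.

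Next I note that it suffices to find an $(m,p,c)$-set that is monochromatic, for parameters depending only on $A$. Such a set consists of all numbers $c y_j+\sum_{k>j}\ell_k y_k$ with $|\ell_k|\le p$ and leading coefficient exactly $c$. The explicit solution above lives inside such a set (after reordering the blocks), with $m=d$, $c=sD$ and $p$ bounded by the $\mu$'s.

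The key step is Deuber's lemma: for every $m,p,c,r$ there is $N_0$ such that every $r$-colouring of $[N_0]$ contains a monochromatic $(m,p,c)$-set. I prove this by induction on $m$ using van der Waerden's theorem. Given a colouring, van der Waerden provides a long arithmetic progression $a+\ell b$ whose terms have a common colour, with $a$ and $b$ divisible by $c$ after passing to the colouring $x\mapsto\chi(cx)$. Iterating over colour classes yields generators $y_1,\dots,y_{m'}$ for $m'=r(m-1)+1$ with the property that all numbers in level $j$ share a colour depending only on $j$. By pigeonhole, $m$ levels share a colour, and these give the monochromatic $(m,p,c)$-set. The bound depends only on $m,p,c,r$, hence only on $A$ and $r$. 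So if $[N]$ has no monochromatic solution, then $N<N_0=O_{A,r}(1)$.

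The main obstacle is the inductive construction in Deuber's lemma. Each new generator must be chosen so that all translates $c y_j+\sum\ell_k y_k$ simultaneously stay monochromatic. This needs a uniform van der Waerden bound applied to the induced colouring of blocks, that is, colouring a block by the vector of colours of its relevant shifts. I would follow Deuber's standard argument, with compactness-free finite bounds, taking care that the nested application of van der Waerden has lengths depending only on $(m,p,c,r)$.
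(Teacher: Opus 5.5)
The paper does not prove this statement; it is quoted from Rado only as motivation. The natural comparison is with \S\ref{sec.deuber}, where the modular analogue (Theorem~\ref{thm.ff}) is proved along the same Deuber skeleton you propose:
\begin{itemize}
\item Proposition~\ref{prop.deub} is your first step: solutions of $Ax=0$ inside a Deuber-type set $S(d,F;t)$ with $d\leq n$.
\item Proposition~\ref{prop.end} is your Deuber lemma: levels are built one at a time, each monochromatic in its own colour, and then pigeonhole is applied on the colours.
\end{itemize}
Your outline is sound, and the difference lies in the engine. Your van der Waerden induction on dilated colourings suits $\Z$. There the coefficient set is an interval $[-p,p]$, and progressions inside $[N_0]$ consist of positive, hence coloured, integers. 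The price is that $N_0$ depends on $p$ and $c$, i.e.\ on the entries of $A$, which is unavoidable over $\Z$ since $f_a(r)\geq a$. Over $\F$ the coefficient set $F$ consists of reductions of rationals, is not an interval, and $0$ is uncoloured. So the paper instead pulls the colouring back, along a suitably $F$-independent $t\in\F^I$, to the cube $(F^d)^{\mathcal{Q}}$ and applies Hales--Jewett (Lemma~\ref{lem.proit}). Independence keeps every point in $\F^*$, and only $|F|\leq (n+1)d^2+2$ enters the bounds. The field size appears only through the need for such a $t$ to exist, which is how a bound depending only on $n$ and $r$ comes out.

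\textbf{First correction: the parametrisation runs the wrong way.} With $x_i=sy_j+\sum_{k<j}\mu_{i,k}y_k$ for $i\in P_j$, the coefficient of $y_j$ in $Ax$ is $s\sum_{c\in P_j}c+\sum_{j'>j}\sum_{i\in P_{j'}}\mu_{i,j}c_i$. This can only vanish if $\sum_{c\in P_j}c$ lies in the span of the \emph{later} columns. For the last block it would force $\sum_{c\in P_d}c=0$, which already fails for Schur's $(1\;\,1\;\,-1)$. Instead put $x_i:=Dy_j-\sum_{k>j}D\lambda_{k,i}y_k$ for $i\in P_j$, with $D$ a common denominator of the $\lambda$'s. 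Then the coefficient of $y_k$ in $Ax$ is $D\bigl(\sum_{c\in P_k}c-\sum_i\lambda_{k,i}c_i\bigr)=0$. The entries indexed by $P_j$ then lie in row $j$ of a $(d,p,D)$-set with $p:=\max|D\lambda_{k,i}|$. No reordering and no extra scalar $s$ are needed; this is exactly the upper-triangular $W$ in Proposition~\ref{prop.deub}.

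\textbf{Second correction: the ``vector of colours'' device is not needed} for the row-monochromatic lemma. To build $M$ rows, proceed as follows.
\begin{itemize}
\item Apply van der Waerden to $x\mapsto\chi(cx)$ to get a monochromatic progression $\{c(a+\ell b):|\ell|\leq L\}$, and put $y_1:=a$.
\item By the induction hypothesis applied to $z\mapsto\chi(cbz)$ on $[n]$, take $z_2,\dots,z_M$ generating a row-monochromatic $(M-1,p,c)$-set in $[n]$, and put $y_k:=cbz_k$ for $k\geq 2$.
\item Rows $2,\dots,M$ are then monochromatic by induction, and row $1$ lies in the progression once $L\geq (M-1)pn$.
\end{itemize}
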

Write $\mathfrak{R}_A(r)$ for the largest $N$ such that there is an $r$-colouring of $[N]$ with no monochromatic solution to $Ax=0$. In this notation Schur's function $f(r)=\mathfrak{R}_{A}(r)$ where $A=(\begin{array}{ccc}1 & 1 & -1\end{array})$.

The matrix $A=(\begin{array}{ccc}a & 1 & -1\end{array})$ satisfies the columns condition, but to have $ax_1+x_2-x_3=0$ with $x_1,x_2,x_3 \in [N]$ we must have $N \geq a+1$, and so certainly $f_a(r):=\mathfrak{R}_{A}(r)\geq a$. In particular, this means that there is no upper bound on $\mathfrak{R}_A(r)$ that depends only on ($r$ and) the number of rows of $A$.  On the other hand there is a modular setting where there is such a bound:
\begin{theorem}\label{thm.pofc}
Suppose that $p$ is prime, $A$ is an $n \times m$ matrix that satisfies the columns condition over $\Z/p\Z$, and $(\Z/p\Z)^*$ is $r$-coloured such that there is no monochromatic solution to $Ax=0$. Then $p=O_{n,r}(1)$.
\end{theorem}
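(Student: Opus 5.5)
The plan is to follow the classical deduction of Rado's theorem from Deuber's $(m,p,c)$-sets, while keeping every parameter controlled by $n$ and $r$ only. The coefficients of $A$ are elements of $\Z/p\Z$ and there are $m$ columns, so neither may appear in the bounds. The steps are: (i) normalise the columns partition so that its length and the shape of its linear relations depend only on $n$; (ii) write down an explicit family of solutions parametrised by a few free variables; (iii) prove a Ramsey statement in $(\Z/p\Z)^*$ that produces a monochromatic member of that family once $p$ is large in terms of $n$ and $r$.

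For (i), take a partition $\mathcal{P}=\{P_1,\dots,P_d\}$ witnessing the columns condition. Let $V_j=\Span(\{c:c\in P_1\cup\dots\cup P_j\})\leq(\Z/p\Z)^n$. The chain $V_1\subseteq V_2\subseteq\cdots$ strictly increases at most $n$ times. I will try to coarsen $\mathcal{P}$ so that $d\leq n+1$. A natural candidate is to group consecutive blocks that do not enlarge $V_j$, and to check directly that the columns condition survives the grouping. If this fails, I would keep $d$ as it is but note that each relation can be rewritten using a basis of $V_{j-1}$ of size at most $n$. Then only $O(n)$ free parameters and $O(n^2)$ coefficients $\alpha$ feed into each level. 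I would then substitute, for each column $c\in P_j$, the variable value
\[x_c=y_j+\sum_{k<j}t_{c,k}\,y_k\]
and check that $Ax=0$. Because columns with identical coefficient data receive identical values, the number of distinct values needed is bounded in terms of $n$ (and $d$), not $m$.

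For (iii), the target is a Deuber-type statement: for every $r$-colouring of $(\Z/p\Z)^*$ with $p\geq p_0(n,r)$ there are nonzero $y_1,\dots,y_{d}$ such that all the prescribed values $y_j+\sum_{k<j}\alpha_{jk}y_k$ are nonzero and share one colour. Here the $\alpha_{jk}$ are arbitrary fixed elements of $\Z/p\Z$, at most $n$ per level. To remove the dependence on the $\alpha$'s I would use that the problem is invariant under dilation by $(\Z/p\Z)^*$. I would then run a colour-focusing induction on $d$, in the style of the proof of Schur's theorem via Ramsey's theorem. At each stage, pass to a large set on which the colouring of the relevant linear forms is determined by a bounded amount of data, using Ramsey's theorem for hypergraphs with a number of colours bounded in $r$ and $n$. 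Then pigeonhole on $r+1$ focused configurations to close the induction.

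I expect the main obstacle to be making (iii) uniform in the coefficients $\alpha_{jk}$. Unlike for integers, the values $y_j+\alpha y_k$ can vanish or collide for particular $\alpha$. The first thing I would try is to choose the $y_k$ so that these forms reduce to differences of elements of a single Ramsey-homogeneous set. Concretely, after the substitutions $y_k\mapsto \alpha^{-1}y_k$, each level then looks like a Schur triple $u-v$ in a dilated copy of the colouring, and Ramsey's theorem with $r^{O(n)}$ colours supplies the bound on $p$.
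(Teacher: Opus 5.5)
Your steps (i)--(ii) are essentially the paper's Proposition~\ref{prop.deub}. Merging consecutive blocks that do not enlarge $V_j$ does preserve the columns condition. Expressing each $\sum_{c\in P_j}c$ through a basis of $V_{j-1}$ (the paper instead keeps a basis of each block and merges the remaining columns) bounds the coefficient set $F$ in terms of $n$. There is one slip: with the columns condition as stated, a column of $P_j$ receives $y_j$ plus an $F$-combination of the \emph{later} parameters $y_k$, $k>j$, as in the matrix $W$; with $k<j$ you must list the blocks in reverse. So what is needed is $t'$ with $S(d,F;t')$ monochromatic and contained in $(\Z/p\Z)^*$, uniformly over all $F$ of bounded size.

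This is where the proposal has a genuine gap. Apply hypergraph Ramsey to differences $D=u_j-u_i$ of a sequence, with pairs coloured by $(\chi(\lambda D))_{\lambda\in\Lambda}$. This only tells you that for each fixed dilation $\lambda$ the colour $\chi(\lambda D)=c_\lambda$ is constant on the homogeneous set. It gives no relation between $c_\lambda$ and $c_{\lambda'}$ for $\lambda\neq\lambda'$. Already for $x+ay=z$, your substitution $y_1=u_j-u_i$, $y_2=a^{-1}(u_k-u_j)$, $y_1+ay_2=u_k-u_i$ yields colours $c_1,c_{a^{-1}},c_1$, with no reason for $c_{a^{-1}}=c_1$. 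Moreover, a single block can carry several different forms. The Brauer configuration from the introduction requires $\{y_1,y_1+y_2,\dots,y_1+(k-1)y_2,y_2\}$ to be monochromatic, so step (iii) contains van der Waerden's theorem in $(\Z/p\Z)^*$. Writing the forms $y_1+iy_2$ ($0\le i<k$) via differences of one homogeneous sequence forces several different dilations, whose colours are again uncontrolled. Finally, once the forms are not single differences, nothing in the proposal prevents some of them from vanishing. This non-vanishing is precisely where the largeness of $p$ has to be used.

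The paper supplies the two missing ingredients. First, it chooses $t\in(\Z/p\Z)^I$ uniformly at random. Each non-zero $f\in(F^{dr})^I$ has $\sum_if_it_i=0$ with probability $1/p$, so once $p>|F^{dr}|^{|I|}$ some $t$ is $F^{dr}$-independent. Then every element of every configuration built from $t$ is automatically non-zero. Second, it pulls the colouring back along $x\mapsto t_Z+\sum_{S\in\mathcal{Q}}\sum_i x(S)_it_{S_i}$ to a Hales--Jewett cube whose alphabet consists of tuples of elements of $F$, and applies the Hales--Jewett theorem (Lemma~\ref{lem.proit}). Since that bound depends only on the alphabet size, this is uniform in the coefficients of $A$. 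The colour focusing you allude to then appears in Proposition~\ref{prop.end}, which iterates the lemma $dr$ times and pigeonholes $d$ rounds with a common colour. However, it runs on combinatorial subspaces produced by Hales--Jewett (or an equivalent Deuber/Graham--Rothschild-type theorem), not on Ramsey-homogeneous sets.
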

We record a proof of this in \S\ref{sec.deuber}. 

The columns condition over $\Z/p\Z$ is different to that over $\Q$: if, for example, $A=(\begin{array}{ccc}1 & p & -p\end{array})$ for a prime $p$ then $A$ satisfies the columns condition over $\Q$, but not over $\Z/p\Z$. In this case the $1$-colouring of $(\Z/p\Z)^*$ will have no monochromatic solutions to $Ax=0$ for the trivial reason that $0$ is not coloured, but $p$ could be arbitrarily large, so we genuinely need the columns condition over $\Z/p\Z$ not over $\Q$ in Theorem \ref{thm.pofc}.

There is no upper bound on $p$ that is independent of $n$. This can be seen by considering a $(k+1)$-point Brauer configuration, which is a $(k+1)$-tuple $x=(x_1,x_1+x_{k+1},\dots,x_1+(k-1)x_{k+1},x_{k+1})$ or, equivalently $x$ with $Ax=0$ where
\[
A =
\begin{pmatrix}
-1 & 1 & 0 & 0 & \cdots & 0 & -1 \\
-1 & 0 & 1 & 0 & \cdots & 0 & -2 \\
-1 & 0 & 0 & 1 & \cdots & 0 & -3 \\
\vdots & & & & \ddots & & \vdots \\
-1 & 0 & 0 & 0 & \cdots & 1 & -(k-1)
\end{pmatrix}.
\]
Here $A$ satisfies the columns condition (over $\Q$ and $\Z/p\Z$) and if $k=p$ then $A$ has $p-1$ rows, and for $p$ prime any solution to $Ax=0$ has $x_i=0$ for some $i$ by the pigeonhole principle. Since no colouring of $(\Z/p\Z)^*$ colours $0$, $x$ is not monochromatic. It follows that $p \geq n+1$ in Theorem \ref{thm.pofc}.

The proof of Theorem \ref{thm.pofc} uses the Hales-Jewett theorem and so the bounds are poor, and it seems quite possible that $p=\exp(r^{O_n(1)})$ is true.  For $n=1$ we are able to show this:
\begin{theorem}\label{thm.main2}
Suppose that $p$ is prime, $A$ is a $1 \times m$ matrix that satisfies the columns condition over $\Z/p\Z$, and $(\Z/p\Z)^*$ is $r$-coloured such that there is no monochromatic solution to $Ax=0$. Then $p=\exp((2r)^{O(1)})$.
\end{theorem}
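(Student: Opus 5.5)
The plan is to reduce the problem to a single three-term configuration and then find that configuration with a Fourier-analytic density increment in Bohr sets. I have not fully checked the density-increment step, and that is where the plan could fail.

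\emph{Reduction.} Write the equation as $a_1x_1+\dots+a_mx_m=0$ and take the partition $P_1,\dots,P_d$ from the columns condition over $\Z/p\Z$. If every $a_i$ is zero there is nothing to prove, since any constant tuple is a solution. Otherwise let $j_0$ be least such that $P_{j_0}$ contains a nonzero coefficient, and put $S=P_1\cup\dots\cup P_{j_0}$. The columns in $P_1\cup\dots\cup P_{j_0-1}$ are all zero, so their span is the zero space. Hence the columns condition forces $\sum_{i\in S}a_i=0$ in $\Z/p\Z$.

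Fix $i_0\in S$ with $a_{i_0}\neq 0$, put $B=\sum_{j\notin S}a_j$, and let $\lambda=-a_{i_0}^{-1}B$. Substitute
\[
x_i=x \text{ for } i\in S\setminus\{i_0\},\qquad x_{i_0}=x+\lambda y,\qquad x_j=y \text{ for } j\notin S.
\]
This gives $\sum_i a_ix_i=(\sum_{i\in S}a_i)x+(a_{i_0}\lambda+B)y=0$. So it suffices to show the following: for every $\lambda\in\Z/p\Z$, every $r$-colouring of $(\Z/p\Z)^*$ has a monochromatic triple $x,y,x+\lambda y$ with all three elements nonzero, provided $p\geq\exp((2r)^{C})$. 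If $\lambda=0$ take $x=y$. Otherwise write $z=x+\lambda y$, so we need a monochromatic solution of $z-x=\lambda y$. This is a $3$-variable equation whose coefficients sum to $-\lambda$, which need not be zero, so it is not translation invariant. The point is that the bound must be uniform in $\lambda$.

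\emph{Density increment in Bohr sets.} I would run an induction on the number of colours, in the style of Bohr-set iteration. Begin with a regular Bohr set $\mathcal{B}_0=\mathbb{Z}/p\mathbb{Z}$ of rank $0$.

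At stage $k$ there is a regular Bohr set $\mathcal{B}_k$ of rank $d_k=r^{O(1)}$ (cumulatively) and radius $\rho_k$. Among the colours still alive, some class $A$ has relative density at least $1/r$ on $\mathcal{B}_k$. Count solutions of $z-x=\lambda y$ with $x,z\in A\cap\mathcal{B}_k$ and $y\in A\cap\lambda^{-1}\mathcal{B}_k'$, where $\mathcal{B}_k'$ is a dilate of $\mathcal{B}_k$ of small radius. This dilate is again a Bohr set, with frequencies scaled by $\lambda$ and the same rank, so $\lambda$ never enters the rank. There are two cases.

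\begin{enumerate}
\item The count is at least $r^{-O(1)}|\mathcal{B}_k||\mathcal{B}_k'|$. This beats the $O(|\mathcal{B}_k'|)$ degenerate solutions (those with some coordinate $0$) once $|\mathcal{B}_k'|$ is large, and we are done.
\item Otherwise $A$ has a large Fourier coefficient relative to $\mathcal{B}_k$. Chang's lemma then gives a Bohr set of rank $d_k+r^{O(1)}$ on which some colour has density increment. More usefully for colourings, we get a sub-Bohr set $\mathcal{B}_{k+1}$ on which $A$ is so sparse that $\mathcal{B}_{k+1}\cap A$ (or $\lambda^{-1}$-dilates of it) is essentially avoided. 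The surviving elements of $\mathcal{B}_{k+1}$ are then $(r-1)$-coloured.
\end{enumerate}

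After at most $r$ stages the Bohr set has rank $r^{O(1)}$ and radius $r^{-O(1)}$. Its size is therefore at least $p\cdot r^{-r^{O(1)}}$, and demanding that this exceed $1$ with room to spare gives $p=\exp((2r)^{O(1)})$.

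\emph{Main obstacle.} The hardest step is the second case: converting the absence of monochromatic triples $z-x=\lambda y$ into a passage to a Bohr set in which one whole colour disappears. This must happen while losing only polynomially in $r$ in rank and radius, and uniformly in $\lambda$. I would try first to use that the $y$-variable lives in a dilate of the $x,z$ Bohr set. A Bogolyubov--Chang argument applied to $A\cap\mathcal{B}_k$ then says that $(A-A)\cap\mathcal{B}_k'$ contains almost all of a smaller Bohr set. If the triple is avoided, $\lambda^{-1}\bigl((A-A)\cap\mathcal{B}_k'\bigr)$ must avoid $A$, and hence $A$ has tiny density on a Bohr set of rank only $r^{O(1)}$ larger. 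I expect the rank and radius bookkeeping under repeated dilation by $\lambda^{-1}$ to need care, since it is exactly what keeps the final bound free of $\lambda$ and so of the coefficients of $A$.
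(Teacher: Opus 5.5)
Your reduction to a monochromatic triple $x,y,x+\lambda y$ is correct and is essentially the paper's: there it reads $x-y=\phi(z)$ with $\phi$ multiplication by $a_1^{-1}b$, and Theorem \ref{thm.maincore} is applied after giving $0$ its own colour. The gap is the density-increment step, and it is a missing idea rather than bookkeeping.

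First, your dichotomy is false as set up. You choose $A$ because it is dense on $\mathcal{B}_k$, but $y$ ranges over $\lambda^{-1}\mathcal{B}_k'$, where the density of $A$ is not controlled at all. Since $|\lambda^{-1}\mathcal{B}_k'|=|\mathcal{B}_k'|$ is negligible compared with $|\mathcal{B}_k|$, a set that is dense and Fourier-uniform relative to $\mathcal{B}_k$ can miss $\lambda^{-1}\mathcal{B}_k'$ entirely. Then your count is $0$ and there is no large Fourier coefficient to exploit.

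Second, the fallback of making a colour ``disappear'' does not iterate. Bogolyubov--Chang puts a whole Bohr set inside $2A-2A$, not $A-A$. For $A-A$ you get at best most of a Bohr set $\mathcal{B}''$, so you only learn that $A$ occupies some small proportion $\epsilon$ of $\lambda^{-1}\mathcal{B}''$. Later stages pass to Bohr sets of relative density $\exp(-r^{O(1)})$, which an $\epsilon$-proportion can cover completely, so sparsity is not inherited. Moreover, the next colour $A'$ is forbidden on $\lambda^{-1}$ of a subset of $\lambda^{-1}\mathcal{B}''$, i.e.\ inside a $\lambda^{-2}$-dilate, where nothing is known about $A$. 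Eliminated colours do not stay eliminated, and the induction on $r$ does not close.

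The paper's proof supplies three ingredients that are missing here.
\begin{enumerate}
\item It selects the colour $j$ by the density of $\phi(A_j)$ (your $\lambda A$) on a small Bohr set $B_1$ centred at $0$. This guarantees room for the $y$-variable.
\item It never eliminates colours. Instead it tracks, for every colour, $S_{i,j}=\min_\mu\|1_{A_j}\ast\mu\|_\infty$ over probability measures $\mu$ supported on the current Bohr set $B^{(i)}$. This quantity can only increase as the Bohr set shrinks, so nothing needs to be hereditary. Working with translates is harmless because $x,z$ enter only through $A_j-A_j$.
\item It uses the $\phi$-adapted Bohr sets $B^{(i)}=\bigcap_{l<i}\phi^{-l}(B(\Lambda_i,\delta_i))$, of rank at most $i|\Lambda_i|$, which is exactly the fix for your dilation problem.
\end{enumerate}
The iteration then runs as follows. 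If $S_{i,j}<1/2r$, then $\phi(B^{(i+1)})\subset B_2$, and a pushforward argument gives $S_{i+1,j}\geq 1/2r$. If $S_{i,j}\geq 1/2r$, then $A_j$ has density $\alpha\geq 1/2r$ on some translate $x_1+B_0$. Lemma \ref{lem.itlem}, built on the local Parseval bound (Lemma \ref{lem.localbessel}), then gives one of two outcomes: at least $\frac{1}{8r^3}\mu_G(B_0)\mu_G(B_1)|G|^2$ monochromatic solutions, or an increase in $S_{\cdot,j}$ by a factor $1+\Omega(1)$. Since $S_{i,j}\leq 1$, this stops after $O(r\log 2r)$ steps with rank $r^{O(1)}$ and width $\exp(-r^{O(1)})$.
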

Our proof uses Fourier analysis very much in the style of the proofs of \cite[Theorem 24]{shk::7}, or the arguments of \cite{cwasch::0}, or \cite[\S2]{chapre::}.  Part of this can be described as using an inverse theorem for the Gowers $U^2$-norm relative to linear level sets (aka Bohr sets), and this suggests the generalisation to higher order Fourier analysis. In this direction Prendiville in \cite{Prendiville:2024aa} established an inverse theorem for the Gowers $U^3$-norm relative to quadratic level sets which he used to prove an analogous bound for $4$-point Brauer configurations in a toy setting. This is some evidence towards the $n=2$ case of Theorem \ref{thm.pofc} being true with the bounds of Theorem \ref{thm.main2}.

To bring us back to the start of the introduction, the same result that will give us Theorem \ref{thm.main2} will also give the following:
\begin{theorem}\label{thm.main3}
Suppose that $a, N \in \N$ are coprime, and $(\Z/N\Z)^*$, meaning the non-zero elements of $\Z/N\Z$, is $r$-coloured such that there is no monochromatic solution to $ax_1+x_2-x_3=0$. Then $N\leq \exp((2r)^{\newconstbig{2}})$.
\end{theorem}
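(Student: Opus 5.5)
We argue by density increment on Bohr sets, in the style of the Fourier arguments cited above. Assume $(\Z/N\Z)^*=C_1\cup\dots\cup C_r$ with no monochromatic solution to $ax_1+x_2=x_3$, and write $G=\Z/N\Z$. The engine is a single ``relative'' statement. Suppose $B$ is a regular Bohr set of rank $d$ and radius $\rho$, and $B'$ is a narrower regular Bohr set. Suppose a colour class $C$ has relative density at least $1/(2r)$ on a translate $x+B$. Then either
\[
\sum_{y,z}1_{C}(y)1_C(z)1_C\bigl((z-y)a^{-1}\bigr)
\]
restricted to suitable dilates of $B,B'$ is large, giving a solution, or $C$ has density increment $\delta(1+\Omega(\delta))$ on a translate of a Bohr set of rank $d+O(\delta^{-O(1)})$ and radius shrunk by a factor $(\delta/d)^{O(1)}$.

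Coprimality of $a$ and $N$ is what lets us dilate by $a^{-1}$ freely. It also means the Bohr sets $aB$ and $B$ are both Bohr sets of the same rank. So the equation behaves like Schur's $x+y=z$ after a change of frequencies.

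The plan is Schur-like iteration. We need monochromatic $x_1,x_2,x_3$, which requires handling that $x_1$ ranges in one colour while $x_2,x_3$ range in another copy of the same colour. We use the standard Schur/Ramsey trick carried out inside Bohr sets. Start with $B_0=G$. Choose $x_1$-candidates inside a small Bohr set $B_{i+1}\subset B_i$: the colour $c_i$ most popular on $a^{-1}(B_{i}\setminus\{0\})$-type regions. Then any $y$ in colour $c_i$ in $B_{i+1}$ together with pairs $(z,z+ay)$ in colour $c_i$ gives a solution.

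Concretely, at stage $i$ we have a Bohr set $B_i$ on which some colour $c$ has density at least $1/r$. We pass, via the counting lemma, to either a solution, or a density increment for $c$ on a sub-Bohr set, or the conclusion that $c$ is nearly absent on a narrower Bohr set $B_{i+1}$. The last case arises because $1_C * 1_C$ is large on the set of differences, which then must avoid $aC$. In that case we discard colour $c$ and continue with $r-1$ colours on $B_{i+1}$.

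Each colour is discarded at most once. Between discards, density increments can occur only $O(r^{O(1)})$ times, since density is bounded by $1$ and starts at $\geq 1/r$. So the total rank reached is $d=(2r)^{O(1)}$, and the radius is $\exp(-(2r)^{O(1)})$. Such a Bohr set has size at least $\rho^d N\geq \exp(-(2r)^{O(1)})N$.

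When the colours are exhausted, this Bohr set minus $0$ must still be coloured, which is a contradiction as long as it contains a non-zero element. That holds once $N>\exp((2r)^{O(1)})$. This gives the bound $N\leq\exp((2r)^{C_2})$.

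The main obstacle is the ``nearly absent'' dichotomy step, i.e. the inverse theorem for $U^2$ relative to Bohr sets. We must show that if $\langle 1_C*1_C, 1_{aC}\rangle$ restricted to $B_i$-scales is small, then either $1_C$ has a Fourier-detected increment (a large coefficient relative to $B_i$, handled by Chang's lemma to keep the rank growth polynomial in $r$), or $C$ is pseudorandom. In the pseudorandom case, $C-C$ covers most of a narrower Bohr set, so $aC$ has small density there.

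Keeping the rank increment polynomial rather than exponential in $r$ is what the bound requires. For this we would use Chang's lemma together with regular Bohr sets in the standard Bourgain fashion.
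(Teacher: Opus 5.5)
There is a genuine gap, and it sits in the bookkeeping that is supposed to make your iteration terminate.

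\textbf{Where it fails.} The ``discard'' outcome only gives an \emph{upper} bound on the density of $C$ on a Bohr set centred at $0$, say $\mu_{a^{-1}B_{i+1}}(C)\le\epsilon$. Upper bounds are not inherited by narrower Bohr sets. Every later stage works on Bohr sets $B_{i+2}\subset B_{i+1}$, or translates of them after an increment step, and $\mu_G(B_{i+2})/\mu_G(B_{i+1})$ is far smaller than $\epsilon$. So $C$ may occupy almost all of $a^{-1}B_{i+2}$. Averaging would make the discarded colours sparse on \emph{some} translate of the narrower Bohr set. That does not help here, because in your set-up the $x_1$-variable is pinned near $0$: it lies in $a^{-1}(x_3-x_2)$ with $x_2,x_3$ in a common translate.

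Already $r=2$ shows the problem. You discard $C_1$ at scale $a^{-1}B_1$. Then $C_2$ is dense and pseudorandom there and is discarded at a narrower scale. Nothing you have proved stops $C_1$ from having density close to $1$ on that narrower Bohr set. So ``each colour is discarded at most once'' is not a bounded potential, and the closing step (``when the colours are exhausted \ldots\ contradiction'') is unjustified.

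Relatedly, your ``engine'' as literally stated is false. Take $C=x+B$ with $x$ far from $0$: it has density $1$ on $x+B$, admits no increment, and contributes nothing to the count. Some density hypothesis on $aC$ near $0$, at the scale of the differences, is indispensable.

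\textbf{What the paper does instead.} The missing idea is to track a quantity that \emph{is} monotone as the Bohr set shrinks. The paper uses
\[
S_{i,j}=\min\{\|1_{A_j}\ast\mu\|_\infty:\mu\text{ a probability measure supported on }B^{(i)}\},
\]
essentially the best density of colour $j$ on translates of the current Bohr set, which can only increase under narrowing.

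At each step it picks the colour $j$ for which $\phi(A_j)=aA_j$ has density at least $1/r$ on a small Bohr set $B_1$ around $0$. This is the colour that is \emph{dense} at the $x_1$-location, the reverse of your discarding. It needs every element coloured, which is why $0$ gets its own colour. There are then two cases:
\begin{enumerate}
\item If $S_{i,j}<1/2r$, the density of $aA_j$ near $0$ pulls back to give $S_{i+1,j}\ge 1/2r$. This is why $B^{(i)}$ is built from the iterates $\phi_*^k(\Lambda_i)$: it ensures $\phi(B^{(i+1)})\subset B_2$.
\item Otherwise, the iteration lemma with $S=\phi(A_j)$ gives either $\exp(-r^{O(1)})N^2$ monochromatic solutions, or $S_{i+1,j}\ge(1+\Omega(1))S_{i,j}$.
\end{enumerate}
Each $S_{\cdot,j}$ can cross $1/2r$ once and then grow multiplicatively only $O(\log 2r)$ times. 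So the process ends after $O(r\log 2r)$ steps with many solutions. Comparing this with the unique monochromatic solution $(0,0,0)$ in the $(r+1)$-colouring gives $N\le\exp((2r)^{O(1)})$.

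Chang's lemma is not needed: the local Parseval bound already gives only $O(\alpha^{-2}\sigma^{-1})$ new frequencies per step.
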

The function $f_a(r)$ defined just before Theorem \ref{thm.pofc} has $f_1(r)=f(r)$ and we can also define the modular version $h_a(r)$ with $h_1(r)=h(r)$, by letting $h_a(r)$ be the largest $N\in \N^*$ such that there is an $r$-colouring of $[N]$ with no monochromatic solutions to $ax+y\equiv z\pmod{N+1}$. 

A natural extension of Abbott and Wang's conjecture would be $f_a(r)=h_a(r)$ for all $r$, but in fact $f_a(2) \neq h_a(2)$ for all $a>1$. This can be shown by a direct combinatorial argument\footnote{Writing $N:=a^2+3a$, if $(x,y,z)$ equals $(1,1,a+1)$, $(a+1,a+1,(a+1)^2)$, $(a+2,1,(a+1)^2)$, $(1,N-a+2,1)$ or $(a+1,a+2,N-a+2)$, then $ax+y\equiv z \pmod {N+1}$. Given a $2$-colouring of $\{1,\dots,N\}$ with no monochromatic solutions to $ax+y\equiv z \pmod {N+1}$, the first two triples ensure that $1$ and $(a+1)^2$ are red, say, and $a+1$ is blue. The third triple then ensures that $a+2$ is blue. The fourth triple then means that $N-a+2$ is blue, and the fifth that it is red, a contradiction.}, but for $a$ large enough (meaning $a^2+3a+1> \exp(4^{\refconstbig{2}})$) we can also use Theorem \ref{thm.main3}: We know from \cite[Theorem 9.11, p239]{lanrob::} that $f_a(2)=a^2+3a$, and so if $h_a(2)=f_a(2)$ we have $(h_a(2)+1,a)=1$ and we can apply Theorem \ref{thm.main3} with $r=2$ and $N=h_a(2)+1$ to get a contradiction. 

Notwithstanding the fact it is false, there are good reasons not to conjecture the natural extension above. Abbott and Wang offered evidence \cite[Problem I, p12]{Abbott:1977aa} for their conjecture including the observation that the colourings which are used to establish the values for $f(1)$, $f(2)$, $f(3)$, and $f(4)$ are also free of monochromatic modular solutions. This is not true for the corresponding colourings in \cite[Theorem 9.11, p239]{lanrob::} establishing the values of $f_a(2)$ for $a>1$.

\subsection*{Notation} We use big-$O$ and big-$\Omega$ notation throughout with the latter in the sense of Knuth \cite{knu::}. We shall also use $C_1,C_2,\dots>1$ and $c_1,c_2,\dots \in (0,1]$ as absolute constants which will be the same throughout the paper. This will help us make it clear that some choices are not circular.

\section{Proof of Theorem \ref{thm.pofc}}\label{sec.deuber}

In this section we shall prove Theorem \ref{thm.pofc}, which is a special case of the next theorem.
\begin{theorem}\label{thm.ff}
Suppose that $\F$ is a finite field, $\F^*$ is $r$-coloured, $A$ is an $n \times m$ matrix satisfying the columns condition over $\F$, and there is no $x \in \F^m$ with $Ax=0$ and $x_1,\dots,x_m$ all the same colour. Then $|\F| = O_{n,r}(1)$.
\end{theorem}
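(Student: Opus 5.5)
The plan is to run the classical Deuber/Rado argument, with care taken that every constant depends only on $n$ and $r$ and never on $|\F|$ or on $m$. \textbf{Step 1 (normalising the columns condition).} Let $\mathcal{P}=\{P_1,\dots,P_d\}$ witness the columns condition, and set $V_j:=\Span_\F(\{c : c\in P_1\cup\dots\cup P_j\})$. If $V_j=V_{j-1}$, then merging $P_j$ into $P_{j+1}$ still gives a valid partition: the sum over $P_j\cup P_{j+1}$ lies in $V_{j-1}$. Since $\dim V_j\le n$, the spans can strictly increase at most $n$ times, so we may assume $d\le n+1$. For each $j$ we fix a basis $B_{j-1}\subset P_1\cup\dots\cup P_{j-1}$ of $V_{j-1}$, with $|B_{j-1}|\le n$, and write $\sum_{c\in P_j}c=\sum_{b\in B_{j-1}}\mu_{j,b}\,b$. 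This introduces at most $n(n+1)$ scalars $\mu_{j,b}\in\F$. Let $\Lambda\subset\F$ be the set consisting of $0$ and $\pm$ these scalars, so that $|\Lambda|\le K(n)$.

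\textbf{Step 2 (solutions from Deuber-type configurations).} We now show that if $y_1,\dots,y_d\in\F$ are suitable, then setting $x_i:=y_j-\sum_{k<j}\mu_{k\to i}y_k$ for $i\in P_j$ gives $Ax=0$. Here the coefficients $\mu_{k\to i}\in\Lambda$ are read off from Step 1 by solving the triangular system part by part, exactly as in Rado's original argument. Every entry of $x$ then lies in a configuration
\[
D(y_1,\dots,y_d):=\Big\{\,y_j+\textstyle\sum_{k<j}\lambda_k y_k:\ 1\le j\le d,\ \lambda_k\in\Lambda'\Big\},
\]
where $\Lambda'$ is a finite set of scalars with $|\Lambda'|=O_n(1)$, generated from $\Lambda$ by boundedly many products and sums. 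It therefore suffices to prove the following: if $|\F|\ge M(n,r)$, then every $r$-colouring of $\F^*$ contains a monochromatic $D(y_1,\dots,y_d)$ with $0\notin D(y_1,\dots,y_d)$.

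\textbf{Step 3 (Hales--Jewett).} We prove this by Deuber's induction on $d$ via the Hales--Jewett theorem over the alphabet $\Lambda'$, or over a slight enlargement of it. Choose elements $g_1,\dots,g_L\in\F$ with $L=L(n,r)$ the relevant Hales--Jewett number, and map a word $w\in(\Lambda')^L$ to $\sum_i w_i g_i$. A combinatorial line, or a combinatorial subspace when we iterate, then yields elements of the form $y_j+\sum\lambda_k y_k$ with the $y$'s being sums of blocks of the $g_i$. The colour-focusing induction produces $y_1,\dots,y_d$ with $D(y_1,\dots,y_d)$ monochromatic, using a number of steps and a word length bounded in terms of $d\le n+1$, $|\Lambda'|$ and $r$ only.

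\textbf{Main obstacle: avoiding $0$.} The colouring is only of $\F^*$, so every point $\sum_i w_ig_i$ that we actually use must be non-zero. The set of words involved has size at most $|\Lambda'|^{L}=O_{n,r}(1)$, and for each non-zero word $w$ the condition $\sum_i w_ig_i=0$ cuts out a hyperplane in $\F^L$. I would choose $(g_1,\dots,g_L)$ outside the union of these hyperplanes, which is possible as soon as $|\F|>|\Lambda'|^L$, by counting $|\F|^L$ against $|\Lambda'|^L|\F|^{L-1}$. The delicate point is that the Hales--Jewett argument must only ever produce elements indexed by words that are non-zero as formal combinations. This is why the coefficient of the ``leading'' variable $y_j$ is normalised to $1$ in Step 2: it guarantees a non-zero wildcard block, so the relevant formal word is non-zero. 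Once this is arranged, $|\F|\le\max(M,|\Lambda'|^L)=O_{n,r}(1)$ whenever no monochromatic solution exists, which is the claim.
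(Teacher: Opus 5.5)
Your overall plan is the paper's plan:
\begin{enumerate}
\item reduce the columns condition to a Deuber-type configuration with $O_n(1)$ coefficients (Proposition~\ref{prop.deub});
\item prove a Deuber theorem by iterating Hales--Jewett over generators that are independent with respect to a bounded alphabet (Lemma~\ref{lem.proit}, Proposition~\ref{prop.end});
\item find such generators by a union bound once $|\F|$ is large.
\end{enumerate}
Your Step 1 is fine, and a little cleaner than the paper's passage to $A'$. Your bound $d\le n+1$ is in fact the right one: trailing parts with $V_j=V_{j-1}$ cannot be merged forward, and for example $A=(1,-1,1)$ needs $d=2$.

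There are two harmless slips. First, with the columns condition as stated, the Rado solution $x=\sum_j y_jw^{(j)}$ has, for $i\in P_j$, $x_i$ equal to $y_j$ plus a combination of the \emph{later} variables $y_k$ with $k>j$, not the earlier ones; just reverse your labels. Second, in the colour-focusing step you must rewrite $\lambda y_k$ over the generators. So the alphabet at level $i$ is the $i$-fold product set of $\Lambda'$, and your genericity condition, and the count replacing $|\Lambda'|^L$, must be taken over the $dr$-fold product set. This is exactly the paper's $F^{dr}$-independence, and it is still $O_{n,r}(1)$.

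The genuine gap is in the step you flag as the main obstacle. Hales--Jewett needs a colouring of \emph{every} word in the cube. Since $0\in\Lambda'$, the all-zero word is sent to $0$, which has no colour; choosing the $g_i$ off the hyperplanes protects only the non-zero words.

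Colouring $0$ arbitrarily does not fix this. A monochromatic line may have its fixed part identically zero, so its anchor point (wildcard value $0$) is $0$. In the colour-focusing iteration these anchors are precisely your $y_j$, and $y_j\in D(y_1,\dots,y_d)$ (take all $\lambda_k=0$).

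Normalising the leading coefficient to $1$ does not help either. It says that $y_j$ occurs with coefficient $1$, not that $y_j$ is itself a non-zero formal word.

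What is missing is a device that makes every \emph{coloured} word non-zero. Either of the following works.
\begin{itemize}
\item The paper reserves a fresh block $Z$, disjoint from everything else, and colours a word by the colour of $t_Z$ plus the combination of generators that the word encodes. Each such combination has coefficient $1$ on $Z$, so by independence it lies in $\F^*$. Every anchor then contains $t_Z$, and the final configuration lies in a colour class of $\F^*$.
\item Alternatively, give $0$ an $(r+1)$-st colour. By independence only the zero word maps to $0$, so no monochromatic line (which has at least two points) contains it, and all anchors are non-zero.
\end{itemize}
With either repair your outline goes through and gives the stated $O_{n,r}(1)$ bound.
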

We shall use a slight variant of Deuber's $(m,p,c)$-sets \cite[(5), p111]{deu::}: For $F \subset \F$, $m \in \N$, and $t \in \F^{m}$ we write
\begin{equation*}
S(m,F; t):=\bigcup_{j=1}^m{S(m,F;t,j)} \text{ where }S(m,F;t,j):=t_j + F.t_{j+1}+ \cdots + F.t_m.
\end{equation*}
This is essentially an $(m-1,p,1)$-set generated by $t$ with the intervals $\{-p,\dots, p\}$ replaced by a more general set $F$. We shall not be concerned with the `$c$' in $(m,p,c)$-sets because we are working over a field which will make all non-zero $c$s equivalent.

First, as in Deuber's approach to Rado's theorem, we connect the columns condition over $\F$ to these sets $S(m,F;t)$ in the following result, proved in \S\ref{sec.cc}:
\begin{proposition}[Proposition \ref{prop.deub}]
Suppose that $A$ is a $k\times l$ matrix that satisfies the columns condition over $\F$. Then there is $d \leq k$ and $F \subset 
\F$ of size at most $(k+1)d^2$, such that for every $t \in \F^{d}$ there is $x \in S(d,F;t)^l$ such that $Ax=0$.
\end{proposition}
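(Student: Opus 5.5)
The plan is to follow Deuber's construction directly. We build the solution $x$ as a linear combination $\sum_j t_j v_j$ of vectors $v_j\in\ker A$, one for each part of a columns-condition partition. Before doing so we prune the partition so that it has at most $k$ parts and each relation uses few coefficients.

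\emph{Step 1 (normalising the partition).} Fix a partition $\mathcal{P}=\{P_1,\dots,P_d\}$ witnessing the columns condition, and put $V_j:=\Span_\F(\{c:c\in P_1\cup\dots\cup P_j\})$ with $V_0=\{0\}$. Suppose $V_j=V_{j-1}$ for some $j<d$. Then we merge $P_j$ and $P_{j+1}$. The sum over the merged part is $\sum_{P_j}c+\sum_{P_{j+1}}c\in V_{j-1}+V_j=V_{j-1}$, and all later spans are unchanged, so the merged partition still witnesses the columns condition. Iterating, we may assume $V_1\subsetneq V_2\subsetneq\cdots\subsetneq V_{d-1}$ and $V_1\neq\{0\}$. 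Hence $\dim V_{j}\ge j$ for $j\le d-1$.

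The main obstacle is the last part. If $V_d\neq V_{d-1}$ then $d\le\dim V_d\le \operatorname{rank}A\le k$. If instead $V_d=V_{d-1}$, the merging trick is unavailable since there is no $P_{d+1}$, and the naive count only gives $d\le k+1$. I would first try to exploit $\dim V_1\ge 1$ more carefully. If $\dim V_1\ge 2$ the count closes. If $\dim V_1=1$, then $P_1$ consists of multiples of a single column summing to zero, and we can try to redistribute columns of $P_d$, all of which lie in $V_{d-1}$, into earlier parts while recomputing the relations. Failing that, we would accept $d\le k+1$, adjusting the bound on $|F|$ accordingly.

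\emph{Step 2 (the kernel vectors).} For each $j$ write $\sum_{c\in P_j}c=\sum_{c'}\lambda^{(j)}_{c'}c'$. Here $c'$ ranges over a fixed basis of $V_{j-1}$ chosen from the columns in $P_1\cup\dots\cup P_{j-1}$, so at most $\dim V_{j-1}\le k$ of the $\lambda^{(j)}$ are non-zero. Define $v_j\in\F^l$ by
\[
v_j(c)=1 \text{ for } c\in P_j,\qquad v_j(c)=-\lambda^{(j)}_c \text{ for } c \text{ in an earlier part},\qquad v_j(c)=0 \text{ otherwise}.
\]
Then $Av_j=0$ by construction.

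\emph{Step 3 (the solution).} Given $t\in\F^d$, set $x:=\sum_{j=1}^d t_jv_j$, so that $Ax=0$. For a column $c\in P_i$ we have
\[
x_c=t_i+\sum_{j>i}\bigl(-\lambda^{(j)}_c\bigr)t_j ,
\]
because $v_j(c)=0$ for $j<i$. This lies in $S(d,F;t,i)\subset S(d,F;t)$ provided $F\supseteq\{0\}\cup\{-\lambda^{(j)}_c\}$. The set $F$ has size at most $1+dk\le (k+1)d^2$, which gives the proposition.
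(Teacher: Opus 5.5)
Your Steps 2 and 3 are correct, and Step 1 correctly gives $d\le k+1$. The case you leave open ($V_d=V_{d-1}$ with $\dim V_1=1$) is a real gap relative to the statement, but it cannot be closed, because the bound $d\le k$ is false.

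Take Schur's matrix $A=(\begin{array}{ccc}1&1&-1\end{array})$, so $k=1$. If $d=0$ then $S(d,F;t)=\emptyset$. If $d=1$ then $S(1,F;t)=\{t_1\}$ whatever $F$ is, so the only candidate is $x=(t_1,t_1,t_1)$, and $Ax=t_1\neq 0$ once $t_1\neq 0$. So redistributing the columns of $P_d$ into earlier parts must fail in general, and your fallback is forced.

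The paper's own proof has the same hole. It merges the blocks $P_{i_{j-1}+1}\cup\dots\cup P_{i_j}$ between consecutive dimension jumps and silently drops the trailing parts $P_{i_r+1},\dots,P_d$. For Schur, with $P_1=\{c_1,c_3\}$, it drops $P_2=\{c_2\}$, so $\{P_1'\}$ is not a partition of all the columns. This is exactly your case $V_d=V_{d-1}$. Keeping that tail as one extra part, whose sum lies in $V_{i_r}$, repairs the argument and gives $d\le k+1$.

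With $d\le k+1$ in place of $d\le k$, your argument is a complete proof. The bound $|F|\le 1+(d-1)k\le (k+1)d^2$ holds for every $d\ge 1$. The only use of the proposition, in the proof of Theorem \ref{thm.ff}, needs just $d=O_n(1)$, so the conclusion $|\F|=O_{n,r}(1)$ is unaffected.

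Otherwise your Step 2 is the same Deuber construction as the paper's, slightly streamlined. The paper passes to an auxiliary matrix $A'$ (a basis of each $\Span_\F(P_j)$ plus one summed column) and solves $A'W=0$ with a block upper-triangular $W$. You instead expand $\sum_{c\in P_j}c$ directly in a basis of $V_{j-1}$ drawn from earlier columns. This avoids lifting solutions from $A'$ back to $A$, and gives a smaller coefficient set, of size $O(dk)$ rather than $O(kd^2)$.
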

Secondly, we need a version of Deuber's theorem for sets of the form $S(m,F;t)$.  To state this we need some terminology which will let us pull back a colouring of $\F$ to a high-dimensional product. We will then be able to apply the Hales-Jewett theorem to this high-dimensional product to get the result.

Given $F \subset \F$ and $I$ a finite set we say that $t\in \F^I$ is \textbf{$F$-independent} if
\begin{equation*}
\sum_{i \in I}{f_i.t_i} = 0_\F \text{ for some }f=(f_i)_{i \in I} \in F^I \text{ implies }f_i =0 \text{ for all }i \in I.
\end{equation*}
For $J \subset I$ we also write $t_J:=\sum_{j \in J}{t_j}$, and so in particular if $1 \in F$ and $t$ is $F$-independent then $t_J=0$ implies $J=\emptyset$.

Finally, for $F \subset \F$ we write $F^0:=\{1\}$ and $F^{i+1}=F.F^i$ for $i \in \N_0$. We can now state the result we use, which we prove in \S\ref{sec.pe}.
\begin{proposition}[Proposition \ref{prop.end}]
For $m,r,s \in\N^*$ there is $\Psi(m,r,s) \in \N^*$ such that if $F \subset \F$ with $0,1 \in F$ has size at most $s$, $\F^*$ is $r$-coloured, $t \in \F^I$ is $F^{mr}$-independent, and $|I| \geq \Psi(m,r,s)$, then there is $t' \in \F^m$ such that $S(m,F;t')$ is monochromatic. 
\end{proposition}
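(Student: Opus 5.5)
We prove Proposition \ref{prop.end} by induction on $m$, following Deuber's proof of his $(m,p,c)$-set theorem. The Hales--Jewett theorem supplies the key step. The base case $m=1$ is pigeonhole: $S(1,F;t')=\{t'_1\}$, so any single $t_i$ works. This needs $t_i\neq 0$, and that holds since $1\in F$ and $t$ is $F$-independent.

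For the inductive step we build sets of the form $t_j+F.t_{j+1}+\dots+F.t_m$ one layer at a time. Split $I$ into blocks $I_1,\dots,I_{r+1}$ (roughly $mr$ blocks in total), each large enough for a Hales--Jewett application. On a block $J$ of size $N$, consider the map
\begin{equation*}
\phi:F^J\to \F;\quad f\mapsto \sum_{j\in J} f_j t_j .
\end{equation*}
Restrict to the words $f\in F^J$ with at least one coordinate equal to $1$. For these, $F$-independence (using $1\in F$) gives $\phi(f)\neq 0$, so the colouring of $\F^*$ pulls back to an $r$-colouring of these words. We need the combinatorial line found below to have its variable set to $1$ at some point, so that its points avoid $0$. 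To guarantee this we apply Hales--Jewett to the whole cube $F^J$, where $0$ is given an extra colour, giving $r+1$ colours in all.

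A combinatorial line in $F^J$ with fixed part $g$ and variable set $W\neq\emptyset$ gives points $\phi(g)+\lambda t_W$ for $\lambda\in F$. These are all non-zero whenever $g$ has a coordinate $1$, or $\lambda$ is non-zero, by independence. So the point $\phi(g)+\lambda t_W$ with $\lambda\neq 0$ is never $0$, and a monochromatic line gives a monochromatic $a+F^*.b$ with $a=\phi(g)$ and $b=t_W$. The same argument, used with a multidimensional Hales--Jewett theorem in $m$ variables, produces a monochromatic $a+F.b_1+\dots+F.b_k$ for the top layer.

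The iteration proceeds as in Deuber. First, use multidimensional Hales--Jewett on one block to obtain elements $u_0,u_1,\dots,u_L$ (each a sum of $t_i$'s with coefficients in $F$) such that each translate $u_0+F.u_1+\dots$ has colour depending only on its "leading index". Then apply pigeonhole over these $r$ colours, repeated about $mr$ times. At each stage the new generators are $F$-combinations of the old ones, and the old ones are $F$-combinations of the $t_i$. Hence after $k$ stages the coefficients lie in $F^{k}$ (sums of products). This is exactly why the hypothesis asks for $F^{mr}$-independence: it ensures that every generated element that should be non-zero really is non-zero, and so actually receives a colour. Collecting, by pigeonhole, $m$ layers of the same colour gives $t'\in\F^m$ with every $S(m,F;t',j)$ monochromatic of a single common colour. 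The resulting $\Psi(m,r,s)$ is an iterated Hales--Jewett bound in alphabet size $s$ and $r+1$ colours.

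The main obstacle is the bookkeeping of independence through the iteration. At each nesting step the coefficients multiply, so the generators of later layers are $F^{k}$-combinations of the $t_i$. To control this I would first prove a lemma: if $t$ is $F^{a+b}$-independent and $t'$ is obtained from disjoint blocks of $t$ via words in $(F^{a})^{J}$, each containing a coordinate $1$, then $t'$ is $F^{b}$-independent. Here we use $1\in F$, so that $F^{a}\subseteq F^{a+1}$, and sums of products of elements of $F$ over disjoint blocks stay inside the appropriate power. With this lemma the induction (on $m$, with $r$ fixed) goes through, and nonvanishing of every point of $S(m,F;t')$ is guaranteed.
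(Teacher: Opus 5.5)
Your single-layer step is fine: giving $0$ an extra colour works just as well as the paper's extra block $Z$ carrying coefficient $1$. The gap is in the final step, where you pigeonhole $m$ layers of the same colour and read off $S(m,F;t')$. This does not work with the construction you describe. You run every Hales--Jewett application over the alphabet $F$ (your bound has alphabet size $s$), so an outer layer only tells you that $a+F.b_1+\dots+F.b_K$ is monochromatic, where $b_1,\dots,b_K$ are its generators. The base point $a'$ of any inner layer is the fixed part of a combinatorial subspace over these generators. So $a'=\sum_\ell g_\ell b_\ell$ with the $g_\ell\in F$ uncontrolled, and $F.a'$ has coefficients in $F.F$. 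Hence the row $a+F.a'+F.a''+\cdots$ of your $S(m,F;t')$ need not lie in $a+F.b_1+\dots+F.b_K$ unless $F.F\subseteq F$. That fails in general, e.g.\ for $F=\{0,1,2\}\subset\Z/p\Z$ with $p\geq 5$. Nothing in your argument controls the colour of such points, and your independence lemma only addresses non-vanishing, not this containment.

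The missing idea is that outer layers must be monochromatic over larger coefficient sets. In the paper the layers are built by a downward recursion $i=mr-1,\dots,0$. Layer $i$ comes from Hales--Jewett over the alphabet $(F^{i+1})^d$, i.e.\ Lemma \ref{lem.proit} applied with $F^{i+1}$ in place of $F$. Its base point $z^{(i)}$ then has coefficients in $F^{i+1}$, so $F.z^{(i)}$ has coefficients in $F^{i+2}\subseteq F^{i'+1}$ for every layer $i'>i$ built before it. Disjointness of the supports then lets these terms add up inside $z^{(i')}+\sum_{P}F^{i'+1}.t_P$.

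This, not accumulation of coefficients in the generators, is where $F^{mr}$-independence is used. New generators are always unions of blocks, i.e.\ $\{0,1\}$-combinations of the original $t_i$. One only needs $t$ to be $F^{i+1}$-independent so that the pulled-back colouring on words over $(F^{i+1})^{d}$ is well defined. With that change, and framed as $mr$ rounds followed by a single pigeonhole rather than an induction on $m$, your outline becomes the paper's proof.
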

With these we can prove the main result of the section.
\begin{proof}[Proof of Theorem \ref{thm.ff}]
By Proposition \ref{prop.deub} there is $d \leq n$ and a set $F'$ of size at most $(n+1)d^2$, such that for every $t' \in \F^d$ there is $x \in S(d,F';t')^m$ with $Ax=0$. Let $F:=F' \cup \{0,1\}$. Let $I$ be a set of size $M:=\Psi(d,r,(n+1)d^2+2)$.

Pick $t_1,\dots,t_M$ independently and uniformly at random from $\F$. Let $\mathcal{F}^*$ be the set of $f \in (F^{dr})^I$ with not all $f_i$s equal to $0_\F$, and note that
\begin{equation*}
\E{\#\left\{ f\in \mathcal{F}^*:\sum_{i \in I}{f_i.t_i}=0_{\F}\right \}}= \frac{|\mathcal{F}|}{|\F|}.
\end{equation*}
If $|\F| > |\mathcal{F}^*|$ then there is a choice of $t \in \F^I$ that is $F^{dr}$-independent and we can apply Proposition \ref{prop.end} to get $t' \in \F^d$ such that $S(d,F;t')$ is monochromatic. However, $S(d,F';t') \subset S(d,F;t')$ and hence there is $x \in \F^m$ monochromatic such that $Ax=0$. This contradiction means that
\begin{equation*}
|\F|\leq |\mathcal{F}^*| = |(F^{dr})^I|-1 \leq (((n+1)d^2+2)^{dr})^{\Psi(d,r,(n+1)d^2+2) }-1=O_{n,r}(1),
\end{equation*}
giving the result.
\end{proof}

\subsection{Using the columns condition}\label{sec.cc}

\begin{proposition}\label{prop.deub}
Suppose that $A$ is a $k\times l$ matrix that satisfies the columns condition over $\F$. Then there is $d \leq k$ and $F \subset 
\F$ of size at most $(k+1)d^2$, such that for every $t \in \F^{d}$ there is $x \in S(d,F;t)^l$ such that $Ax=0$.
\end{proposition}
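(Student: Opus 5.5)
The plan is to unpack the columns condition into an explicit parametrisation of solutions, in the spirit of Deuber's treatment of Rado's theorem. Let $\mathcal{P}=\{P_1,\dots,P_d\}$ be a partition witnessing the columns condition over $\F$. For $0 \leq j \leq d$ write $V_j:=\Span_\F(\{c: c\in P_1\cup\dots\cup P_j\})$, so $V_0=\{0\}$.

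\emph{Step 1: normalising so that $d\leq k$.} Suppose $V_j=V_{j-1}$ for some $j<d$. Replace $P_j,P_{j+1}$ by the single block $P_j\cup P_{j+1}$. The sum over $P_j$ lies in $V_{j-1}$ by hypothesis. The sum over $P_{j+1}$ lies in $V_j=V_{j-1}$. Hence the sum over the merged block lies in $V_{j-1}$, and every later condition is unchanged because the union of the earlier blocks is unchanged. This also handles the case $j=1$ with $V_1=\{0\}$, i.e.\ $P_1$ consisting of zero columns. If $V_d=V_{d-1}$ with $d>1$, merge $P_d$ into $P_{d-1}$ in the same way. Repeating, we may assume $0<\dim V_1<\dots<\dim V_d\leq k$, so $d\leq k$. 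The only exception is when every column of $A$ is zero, and then $d=1$.

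\emph{Step 2: choosing coefficients.} For each $2\leq j\leq d$ we have $\sigma_j:=\sum_{c\in P_j} c\in V_{j-1}$. Fix a basis $B_{j-1}$ of $V_{j-1}$ consisting of columns from $P_1\cup\dots\cup P_{j-1}$. Write
\[
\sigma_j=\sum_{c\in B_{j-1}}\lambda_{j,c}\,c,
\]
and set $\lambda_{j,c}:=0$ for the remaining columns $c$ in earlier blocks. Also $\sigma_1=0$.

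Define
\[
F:=\{0\}\cup\{-\lambda_{j,c}: 2\leq j\leq d,\ c\in B_{j-1}\}.
\]
Each $B_{j-1}$ has at most $k$ elements, so $|F|\leq 1+(d-1)k\leq (k+1)d^2$.

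\emph{Step 3: the solution.} Given $t\in\F^d$, for a column $c\in P_i$ put
\[
x_c:=t_i+\sum_{j>i}(-\lambda_{j,c})\,t_j .
\]
This lies in $S(d,F;t,i)\subset S(d,F;t)$. Computing and collecting by $t_j$ gives
\[
Ax=\sum_j x_c\,c=\sum_{j=1}^d t_j\Bigl(\sigma_j-\sum_{i<j}\sum_{c\in P_i}\lambda_{j,c}\,c\Bigr)=0 .
\]

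The only genuinely delicate point is Step 1. One must check that the merging preserves the columns condition, including the degenerate first block and the case of zero columns, and that it really forces $d\leq k$. Steps 2 and 3 are a routine calculation once the coefficients $\lambda_{j,c}$ are supported on a basis, which is what gives the size bound on $F$.
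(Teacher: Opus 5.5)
\paragraph{The failing step.} The gap is the last move of your Step 1: ``If $V_d=V_{d-1}$ with $d>1$, merge $P_d$ into $P_{d-1}$ in the same way.'' It is not the same situation.

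Your forward merge of $P_j$ and $P_{j+1}$ works because the equality $V_j=V_{j-1}$ sits at the \emph{first} of the two merged blocks, so $\sigma_{j+1}\in V_j=V_{j-1}$. Merging $P_{d-1}\cup P_d$ instead needs $\sigma_{d-1}+\sigma_d\in V_{d-2}$. You only know $\sigma_d\in V_{d-1}$, and in general $V_{d-1}\neq V_{d-2}$.

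Schur's matrix shows this concretely. Take $A=(\begin{array}{ccc}1&1&-1\end{array})$ with $P_1=\{c_1,c_3\}$ and $P_2=\{c_2\}$. Then $V_1=V_2=\F$, and the single merged block has column sum $1\neq 0$.

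What your merging genuinely produces is $0=V_0\subseteq V_1\subseteq\cdots\subseteq V_d$ with $\dim V_{j-1}<\dim V_j$ for all $j<d$, but possibly $V_{d-1}=V_d$. This gives only $d\leq k+1$. Equivalently: merge each non-increasing block forward, and lump all blocks after the last strict dimension increase into one final block. That lumping is legitimate because all their column sums lie in the same space.

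\paragraph{The statement itself is false.} No argument can close this gap, because the statement with $d\leq k$ is false. For the Schur matrix $k=1$, and $d=1$ forces $S(1,F;t)=\{t_1\}$. Hence $x=(t_1,t_1,t_1)$ and $Ax=t_1\neq 0$ whenever $t_1\neq 0$.

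The paper's own proof breaks at the same point. Its blocks $P_1',\dots,P_r'$ omit every $P_j$ with $j>i_r$, so they need not partition the columns.

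\paragraph{What your argument does prove.} With $d\leq k+1$ in place of $d\leq k$, your Steps 2 and 3 are correct verbatim and give $|F|\leq 1+(d-1)k\leq (k+1)d^2$. Choosing the coefficients $\lambda_{j,c}$ supported on a basis of $V_{j-1}$ is a tidier route than the paper's. The paper first compresses $A$ to a matrix $A'$ with $n_j+1$ columns per block and then puts every entry of the vectors $\alpha_i^{(j)}$ into $F$.

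The weaker bound $d\leq k+1$ is all the proof of Theorem \ref{thm.ff} needs, since there only $d=O_n(1)$ matters.
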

\begin{proof}
Since $A$ satisfies the columns condition over $\F$ there is a partition $\mathcal{P}=\{P_1,\dots,P_{d}\}$ of the columns of $A$ such that if we write
\begin{equation*}
V_j:=\Span_\F(\{c: c \in P_1\cup \dots \cup P_{j}\}) \text{ for all }0 \leq j \leq d.
\end{equation*}
then
\begin{equation*}
\sum_{c \in P_{j}}{c} \in V_{j-1} \text{ for all }1 \leq j \leq d.
\end{equation*}
Let $J:=\{1 \leq j \leq d: \dim V_j = \dim V_{j-1}\}$ and let $i_1<i_2<\dots<i_r$ be the elements of $[d]\setminus J$ in order. Since
\begin{equation*}
0<\dim V_{i_1}<\dim V_{i_2}<\dots <\dim V_{i_r}
\end{equation*}
and these are subspaces of $k$-dimensional space we have $r \leq k$. Write $i_0:=0$ and 
\begin{equation*}
P_j':=P_{i_j}\cup P_{i_j-1}\cup \dots \cup P_{i_{j-1}+1} \text{ for all }1 \leq j \leq r.
\end{equation*}
Then $A$ satisfies the columns condition over $\F$ w.r.t.\ the partition $\{P_1',\dots,P_{r}'\}$. It follows that we may assume that $d\leq k$.

Let $n_j:=\dim \Span_\F(P_j)$ and let $c_1,\dots,c_{n_j} \in P_j$ be a basis; write $R_j:=P_j\setminus \{c_1,\dots,c_{n_j}\}$. Since the vectors in $P_j$ are columns in a matrix with $k$ rows we have that $n_j \leq k$. Now replace $A$ by the matrix $A'$ with the columns $c_1,\dots,c_{n_j}$ in each $P_j$ left the same, and each family $R_j$ replaced by the single column $\sum_{c \in R_j}{c}$ (which is the zero column if there are no remaining columns, so in this case we add an extra column). In particular the resulting $A'$ has $q:=(n_1+1)+\cdots + (n_d+1)$ columns.

Any $x\in \F^q$ with $A'x=0$ has some $\wt{x} \in \F^l$ with $A\wt{x}=0$ and the entries of $\wt{x}$ are just the entries of $x$ repeated some number of times. In particular, if we can show the conclusion for $A'$ then we have it for $A$.

This new matrix $A'$ satisfies the columns condition over $\F$ and in matrix form this tells us that $A'W=0$ for
\[
W=
\begin{pmatrix}
\mathbf{1}_{n_1+1} & \alpha^{(2)}_1 & \alpha^{(3)}_1 & \cdots & \alpha^{(d)}_1 \\
0                & \mathbf{1}_{n_2+1} & \alpha^{(3)}_2 & \cdots & \alpha^{(d)}_2 \\
0                & 0                & \mathbf{1}_{n_3+1} & \cdots & \alpha^{(d)}_3 \\
\vdots           & \vdots           & \vdots           & \ddots & \vdots \\
0                & 0                & 0                & \cdots & \mathbf{1}_{n_d+1}
\end{pmatrix},
\]
Here $\mathbf{1}_{n_i+1}$ is the $(n_i+1) \times 1$ matrix (column vector) all of whose entries are $1$, and $\alpha_i^{(j)}$ is an $(n_i+1) \times 1$ matrix (column vector). Now let
\begin{equation*}
F:=\{\text{ entries in the column vector }\alpha_i^{(j)}: 1 \leq i < j \leq d\},
\end{equation*}
and write $w^{(1)},\dots,w^{(d)}$ for the columns of the matrix $W$.

Suppose that $t=(t_1,\dots,t_d) \in \F^{d}$. Then
\begin{equation*}
x:=t_{1}w^{(1)}+t_{2}w^{(2)} + \cdots +t_{d}w^{(d)} \in \Span(w^{(1)},\dots,w^{(d)})
\end{equation*}
and so $A'x=0$.  For $1 \leq i \leq q$ there is a unique $1 \leq j \leq d$ such that
\begin{equation*}
(n_1+1)+\cdots + (n_{j-1}+1) < i \leq (n_1+1)+\cdots + (n_{j}+1),
\end{equation*}
and by construction of $x$ we then have
\begin{equation*}
x_i \in S(d,F;t,j).
\end{equation*}
It follows that $x_i \in S(d,F;t)$ for all $1 \leq i \leq q$ as claimed. Finally it remains to note that $|F| \leq \sum_{i=1}^{d-1}{(d-i)(n_i+1)}\leq (k+1)d^2$.
\end{proof}

\subsection{Proving the analogue of Deuber's theorem}\label{sec.pe}
This is a simple adaptation of the proof of Deuber's theorem in \cite[\S4, p8]{gun::}, which also has an exposition at \cite{Moreira:2014aa}. That proof of Deuber's theorem uses the Hales-Jewett theorem, and to record that we need some notation. 

For finite sets $A$ and $B$ it is useful to write $A^B$ for the set of $B$-tuples of elements of $A$, so that $A^B$ is also finite and $|A^B|=|A|^{|B|}$. We use both the notation $(a_b)_{b \in B}$ and $a:B \rightarrow A$ for elements of $A^B$. 

\begin{theorem}[Hales-Jewett theorem]\label{thm.dhj}
For every $r,k \in \N^*$ there is a positive integer $H(r,k)$ such that for every $n \geq H(r,k)$, and all sets $A$ of size at most $k$ and $B$ of size at least $n$, for any $r$-colouring of $A^B$ there is $\emptyset \neq W \subset B$ and $z \in A^B$ such that
\begin{equation*}
\{x \in A^B: \exists a \in A \text{ with } x|_W \tikzmarknode{b1}{\equiv} a \text{ and }x|_{B \setminus W} \tikzmarknode{b2}{=} z|_{B \setminus W}\}
\end{equation*}
\begin{tikzpicture}[overlay, remember picture, >=stealth]

\node[font=\tiny, left=8cm of pic cs:b1, yshift=-0.5\baselineskip, align=left, red] (explain)
{ $x$ is takes the\\ constant value\\ $a$ on $W$};

\node[font=\tiny, right=5.5cm of pic cs:b2, yshift=-0.5\baselineskip, align=left, red] (explain2)
{ $x$ and $z$ agree\\  on $B \setminus W$.\\ value(s) of $z$\\ on $B$ irrelevant};

\draw[->, red,rounded corners=5pt]
  ([xshift=0.5cm]explain.east) -| (b1.south);

\draw[->, red, rounded corners=5pt]
  ([xshift=-0.5cm]explain2.west) -| (b2.south);
\end{tikzpicture}
is monochromatic.
\end{theorem}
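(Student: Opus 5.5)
The Hales--Jewett theorem is a classical result, so the natural route is the standard one: prove it by induction on $k$, essentially Shelah's argument, or more simply the colour-focusing argument. Since the statement only asks for existence of $H(r,k)$, the poor tower-type bounds of colour focusing are acceptable. We may also reduce immediately to $|A|=k$ exactly, by enlarging $A$ with dummy symbols and restricting the colouring appropriately, and to $|B|=n$, by fixing coordinates outside an $n$-element subset and extending $z$ by these fixed values. We then also note that a combinatorial line in $A^B$ is precisely a set of the displayed form with wildcard set $W$.

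\textbf{Base case.} For $k=1$, any nonempty $W$ works, and $H(r,1)=1$.

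\textbf{Induction step.} Assume $H(r',k-1)$ exists for all $r'$, and write $A=\{a_1,\dots,a_k\}$. Call $x$ and $x'$ \emph{equivalent} if they differ only by exchanging $a_{k-1}$ and $a_k$ in some coordinates.

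The colour-focusing lemma is the key intermediate step. For each $s\le r$ there is $N(s)$ such that any $r$-colouring of $A^{N(s)}$ contains either a monochromatic line, or $s$ lines $L_1,\dots,L_s$ with the following properties:
\begin{itemize}
\item[(i)] their restrictions to $\{a_1,\dots,a_{k-1}\}$, i.e.\ the first $k-1$ points, are each monochromatic;
\item[(ii)] these colours are distinct;
\item[(iii)] all the lines share the same endpoint at $a_k$, the ``focus''.
\end{itemize}

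We build $N(s)$ inductively: $N(s)=N(s-1)+H(r^{k^{N(s-1)}},k-1)$. Split the coordinates into blocks $B_1\sqcup B_2$ with $|B_1|=N(s-1)$. Colour each $y\in\{a_1,\dots,a_{k-1}\}^{B_2}$ by the entire colouring of $A^{B_1}\times\{y\}$. This is a colouring with $r^{k^{N(s-1)}}$ colours, so by the induction hypothesis there is a line in $\{a_1,\dots,a_{k-1}\}^{B_2}$ whose $k-1$ points give identical colourings of $A^{B_1}$. Extending that line with $a_k$ on its wildcard set, the $k$th point is $y_k$. Inside $A^{B_1}\times\{y_1\}$ we find, by the case $s-1$, either a monochromatic line (and we are done) or $s-1$ focused lines with focus $f$.

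Now combine each of these $s-1$ lines with the $B_2$-line: take the same wildcard on both blocks, and on $B_2$ use the $B_2$ line. This produces $s-1$ new lines focused at $(f,y_k)$, and their first $k-1$ points keep their colours because the colourings agree along the $B_2$ line. Add a further line: the constant $f$ on $B_1$ together with the $B_2$ line. Its first $k-1$ points all have colour $c(f,y_1)$ and it is also focused at $(f,y_k)$.

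If $c(f,y_1)$ equals one of the earlier colours, that line is monochromatic. If the colour of the focus $(f,y_k)$ equals one of the colours, the corresponding line is monochromatic. Otherwise we obtain $s$ focused lines with distinct colours.

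Taking $s=r$, the focus must share a colour with one of the $r$ lines, since only $r$ colours are available, and that line is monochromatic. Hence $H(r,k)=N(r)$.

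The main obstacle is the bookkeeping in the focusing step. We must verify that the concatenated objects really are lines in the sense of the theorem, with a nonempty common wildcard set $W$ and with $z$ agreeing off $W$. We must also check that the $k$th point of each combined line is indeed the common focus. Everything else is routine.
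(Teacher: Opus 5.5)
The paper gives no proof of this statement. It quotes the Hales--Jewett theorem as a standard black box and uses only the existence of $H(r,k)$, in Lemma~\ref{lem.proit}. So there is no competing route to compare against.

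Your colour-focusing argument is a correct, self-contained proof of what is quoted, and the bookkeeping you flag checks out:
\begin{itemize}
\item the combined objects $(L_i(j),y_j)_{j\le k}$ are genuine lines with wildcard set $W_i\cup W\neq\emptyset$;
\item their first $k-1$ points keep the colour $c_i$, because the colouring of $A^{B_1}\times\{y_j\}$ does not depend on $j\le k-1$;
\item all of them, including the extra line $(f,y_j)_{j\le k}$, end at $(f,y_k)$.
\end{itemize}

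Two small repairs are needed.

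First, the reduction to $|A|=k$ cannot be done by ``restricting'' a colouring of $A^B$ to $A'^B$ for a larger $A'$. Instead, pull back along a retraction $\pi\colon A'\to A$ that fixes $A$. Colour $x'$ by the colour of $\pi\circ x'$, and take $z:=\pi\circ z'$. Alternatively, run the induction with $k=|A|$ exactly and note that your recursion gives $H(r,k-1)=N(1)\le N(r)=H(r,k)$, so $H(r,\cdot)$ is nondecreasing.

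Second, you never state the base of the recursion for $N(s)$. Take $N(0):=0$, with the empty family of lines focused at the unique point of $A^{\emptyset}$, so that $N(1)=H(r,k-1)$.

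What your proof adds over the citation is an explicit (Ackermann-type) value for $H(r,k)$. This does not matter for the paper, since Theorem~\ref{thm.pofc} is only claimed with an unspecified $O_{n,r}(1)$ bound.
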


\begin{lemma}\label{lem.proit}
For every $s,d,r \in \N^*$ there is $\Phi(s,d,r) \in \N^*$ such that if $1 \in F \subset \F$ has size at most $s$, $\F^*$ is $r$-coloured, $t \in \F^I$ is $F$-independent,  and $\mathcal{P}$ is a partition of a subset of $I$ with $|\mathcal{P}| \geq \Phi(s,d,r)$, then there is $\mathcal{E}\subset \mathcal{P}$ and a partition $\mathcal{P}'\cup \{\bigcup{\mathcal{E}}\}$ of a subset of $I$ such that
\begin{enumerate}[label=(\roman*)]
\item $|\mathcal{P}'| \geq d$;
\item every set in $\mathcal{P}'$ is a union of sets in $\mathcal{P}$;\footnote{$\mathcal{P}$ need not be a refinement of $\mathcal{P}'\cup \{\bigcup{\mathcal{E}}\}$ in the normal sense because the base set of $\mathcal{P}'\cup \{\bigcup{\mathcal{E}}\}$ may be smaller than that of $\mathcal{P}$.}
\item  there is $t_0' \in \sum_{P \in \mathcal{E}}{F.t_P}$ and a colour class $C$ with
\begin{equation*}
t_0'+\sum_{P \in \mathcal{P}'}{F.t_P} \subset C.
\end{equation*}
\end{enumerate}
\end{lemma}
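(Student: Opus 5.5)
We apply Theorem \ref{thm.dhj} with alphabet $A:=F$ and a coordinate set $B$ built from $\mathcal{P}$. Fix an integer $n$ and take $\Phi(s,d,r)\geq n\cdot H(r,s^{d'})$ for a suitable $d'$ chosen below. Discard all but $N\cdot n$ cells of $\mathcal{P}$ and arrange them in $N$ blocks of $n$ cells: $P_{b,1},\dots,P_{b,n}$ for $b\in B:=[N]$. A word $x\in F^B$ determines the element
\begin{equation*}
\phi(x):=\sum_{b\in B} x_b\, t_{Q_b},\qquad Q_b:=P_{b,1}\cup\dots\cup P_{b,n}.
\end{equation*}
To see that $\phi(x)\neq 0$ when $x\neq 0$, write $\phi(x)=\sum_{i} f_i t_i$ with $f_i=x_b$ for $i\in Q_b$ and $f_i=0$ off $\bigcup_b Q_b$. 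All $f_i$ lie in $F$ (here $0\in F$ is needed, and we add it harmlessly if absent, at the cost of replacing $s$ by $s+1$). So $F$-independence of $t$ gives $\phi(x)\neq 0$. The constant-zero word is the only problematic point. We handle it either by colouring it arbitrarily with one of the $r$ colours, or by removing $0$ from the alphabet; the latter is not allowed, since the lines we want must allow coefficient $0$.

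A single application of Hales--Jewett gives a combinatorial line of $F^B$ whose image is monochromatic: a set $W$ and a base $z$ with
\begin{equation*}
\{\,t_0'+a\,t_{\bigcup_{b\in W}Q_b}: a\in F\,\}\subset C,\qquad t_0'=\sum_{b\notin W}z_b t_{Q_b}.
\end{equation*}
This only produces one free direction, but (i) and (iii) ask for $d$ free directions $t_P$, $P\in\mathcal{P}'$, each with independent coefficients in $F$. So we use the higher-dimensional form. Apply Theorem \ref{thm.dhj} to the alphabet $A:=F^d$, of size at most $s^d$, over $B=[N]$ with $N\geq H(r,s^d)$. Each $b$ now carries $d$ cells $P_{b,1},\dots,P_{b,d}$, and we colour $x=(x_b)_{b}\in (F^d)^B$ by the colour of
\begin{equation*}
\phi(x):=\sum_{b}\sum_{k=1}^d x_{b,k}\, t_{P_{b,k}}.
\end{equation*}
A monochromatic line with active set $W$ and base $z$ yields, for all $a=(a_1,\dots,a_d)\in F^d$, that $\sum_{b\notin W}\sum_k z_{b,k}t_{P_{b,k}}+\sum_k a_k t_{R_k}$ all have one colour, where $R_k:=\bigcup_{b\in W}P_{b,k}$. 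Take $\mathcal{P}':=\{R_1,\dots,R_d\}$, which are unions of cells of $\mathcal{P}$, so (ii) holds. Take $\mathcal{E}:=\{P_{b,k}: b\notin W\}$ and $t_0':=\sum_{P\in\mathcal E} f_P t_P$ with $f_{P_{b,k}}=z_{b,k}\in F$, so $t_0'\in\sum_{P\in\mathcal E}F.t_P$. The cells in $\mathcal{P}'$ and the set $\bigcup\mathcal E$ are pairwise disjoint, giving the required partition of a subset of $I$, and (i) and (iii) hold. Hence $\Phi(s,d,r):=d\,H(r,(s+1)^d)$ suffices.

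The main obstacle is the value $0$. Since $0\notin\F^*$ is uncoloured, the map $\phi$ must avoid $0$ on all of $(F^d)^B$, or at least we must control the point where $\phi=0$. By $F$-independence, $\phi(x)=0$ only for the all-zero word. That word we assign an arbitrary colour, say colour $1$, before invoking Hales--Jewett. The issue is that the monochromatic line might then contain it, namely when $z\equiv 0$ off $W$ and the constant $a=0$ is taken. If that colour was fake, the conclusion (iii) would wrongly include $0$ in $C$.

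To handle this, first interpret (iii) with $t_0'+\sum F.t_P$ where $0$ may occur. Since $0\notin C\subset\F^*$, we must exclude it. The cleanest fix is to note that if $t_0'=0$ then $0\in t_0'+\sum F.t_P$ necessarily (as $0\in F$), so the lemma implicitly requires $t_0'\neq 0$ or $0\notin F$.

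I would therefore check how $F$ is used. Proposition \ref{prop.end} has $0,1\in F$, while Lemma \ref{lem.proit} assumes only $1\in F$. So the plan is to apply Hales--Jewett over the alphabet $F^d$ with $0$ possibly absent. Then every word has all coordinates in $F\setminus\{0\}$ unless $0\in F$. If $0\notin F$, every word gives nonzero $\phi(x)$ by independence, and the argument above is complete with no fake colour.

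If $0\in F$, I would instead reserve one extra block $b_0$, forced outside $W$, with $z_{b_0}$ set to $1$. This guarantees a nonzero coefficient and hence $t_0'\neq 0$ and $\phi\neq 0$ throughout. Concretely, we apply Hales--Jewett on $B\setminus\{b_0\}$ with $x_{b_0}$ fixed to $(1,0,\dots,0)$, which is where $1\in F$ is used. This costs one extra block in $\Phi$, and I expect this bookkeeping to be the only delicate step.
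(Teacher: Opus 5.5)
Once the false starts are removed, your final construction is the paper's proof. You apply Hales--Jewett over the alphabet $F^d$ with one coordinate per block of $d$ cells of $\mathcal{P}$ and pull the colouring back through $\phi$. One reserved cell carries the fixed coefficient $1$ (the paper's $Z$), so $\phi$ never vanishes and $t_0'\neq 0$. Then $\mathcal{E}$ is the reserved cell plus the cells of the inactive blocks, and $\mathcal{P}'=\{R_1,\dots,R_d\}$ consists of the unions over the active set $W$.

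Discard the branch for $0\notin F$, together with the claim that $0$ can be added to $F$ harmlessly. For $0\notin F$, $F$-independence says nothing about sums with coefficient $0$ on the unused indices of $I$. Replacing $F$ by $F\cup\{0\}$ strengthens the hypothesis rather than coming for free. Indeed the lemma fails there: with $F=\{1\}$ and cells each having $t_P=0$, $t$ can still be $\{1\}$-independent, yet every $t_0'+\sum_{P\in\mathcal{P}'}F.t_P=\{0\}$. So $0\in F$ must be assumed; the paper's proof tacitly uses this, and it holds in Proposition \ref{prop.end}.
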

\begin{proof}
Set $\Phi(s,d,r):=dH(r,s^d)+1$. Let $A:=F^{d}$ so $|A| \leq s^d$; and $\mathcal{Q}$ be a partition (of some subset of the set partitioned by $\mathcal{P}$) such that each $S \in \mathcal{Q}$ has $S=S_1\sqcup \cdots \sqcup S_{d}$ where $S_1,\dots,S_{d}\in\mathcal{P}$, and $|\mathcal{Q}| = H(r,s^{d})$; and let $Z \in \mathcal{P}$ be disjoint from $\bigcup{\mathcal{Q}}$. This is possible since $|\mathcal{P}| \geq \Phi(s,d,r)$.

\begin{figure}[h]
\centering
\begin{tikzpicture}[x=1cm,y=1cm]

\def\xA{0.0}
\def\xB{1.}
\def\xC{2}
\def\xD{3}
\def\xE{4}

\def\yA{0.0}
\def\yB{1.0}
\def\yC{2.}
\def\yD{2.}
\def\yE{3.}
\def\yF{4}

\draw[black, line width=0.7pt] (\xA,\yA) rectangle (\xE,\yF);
\draw[black, line width=0.7pt] (\xB,\yA) -- (\xB,\yF);
\draw[black, line width=0.7pt] (\xC,\yA) -- (\xC,\yF);
\draw[black, line width=0.7pt] (\xD,\yA) -- (\xD,\yF);

\draw[black, line width=0.7pt] (\xA,\yB) -- (\xE,\yB);
\draw[black, line width=0.7pt] (\xA,\yD) -- (\xE,\yD);
\draw[black, line width=0.7pt] (\xA,\yE) -- (\xE,\yE);

\draw[red, very thick, rounded corners=2pt, xshift=-2pt, yshift=-2pt] (\xA,\yA) rectangle (\xB,\yF);
\draw[red, very thick, rounded corners=2pt, xshift=-2pt, yshift=-2pt] (\xB,\yA) rectangle (\xC,\yF);
\draw[red, very thick, rounded corners=2pt, xshift=-2pt, yshift=-2pt] (\xC,\yA) rectangle (\xD,\yF);
\draw[red, very thick, rounded corners=2pt, xshift=-2pt, yshift=-2pt] (\xD,\yA) rectangle (\xE,\yF);

\node at ($( \xA,\yF)!0.5!(\xB,\yE)$) {$S_1$};
\node at ($( \xB,\yF)!0.5!(\xC,\yE)$) {$T_1$};
\node at ($( \xC,\yF)!0.5!(\xD,\yE)$) {$\cdots$};
\node at ($( \xD,\yF)!0.5!(\xE,\yE)$) {$\cdots$};

\node at ($( \xA,\yE)!0.5!(\xB,\yD)$) {$S_2$};
\node at ($( \xB,\yE)!0.5!(\xC,\yD)$) {$T_2$};
\node at ($( \xC,\yE)!0.5!(\xD,\yD)$) {$\cdots$};
\node at ($( \xD,\yE)!0.5!(\xE,\yD)$) {$\cdots$};

\node at ($( \xA,\yB)!0.5!(\xB,\yD)$) {$\vdots$};
\node at ($( \xB,\yB)!0.5!(\xC,\yD)$) {$\vdots$};

\node at ($( \xA,\yB)!0.5!(\xB,\yA)$) {$S_d$};

\draw[<->, >=stealth, line width=0.8pt]
  (-0.6,\yA) -- (-0.6,\yF)
  node[midway, left=6pt] {$d$ rows};

\draw[<->, >=stealth, line width=0.8pt]
  (\xA,\yF+0.6) -- (\xE,\yF+0.6)
  node[midway, above=6pt] {$H(r,s^d)$ columns};

\def\xR{5.2} 

\def\xS{9.2} 

\draw[black, line width=0.7pt]  (\xR,2.6) rectangle ++(1,1);
\draw[black, line width=0.7pt]  (\xR+1.8,2.6) rectangle ++(1,1);
\draw[black, line width=0.7pt]  (\xR+0.9,1.2) rectangle ++(1,1);

\node at (\xR+1.4,1.7) {$Z$};
\end{tikzpicture}
\caption{Black squares represent sets in $\mathcal{P}$; red rectangles represent sets in $\mathcal{Q}$.}
\end{figure}

If $x \in A^\mathcal{Q}$ then for $S \in \mathcal{Q}$, we write $x(S)=(x(S)_1,\dots,x(S)_d) \in F^{d}$. The map
\begin{equation*}
\phi:A^\mathcal{Q} \rightarrow \F^*; x \mapsto t_Z+\sum_{S \in \mathcal{Q}}{\sum_{i=1}^{d}{x(S)_it_{S_i}}}
\end{equation*}
genuinely maps \emph{into} $\F^*$ because $t$ is $F$-independent and $1 \in F$. This means we can use it to colour $A^\mathcal{Q}$ by pulling back the colouring of $\F^*$. This gives an $r$-colouring of $A^\mathcal{Q}$, and in view of the sizes of $\mathcal{Q}$ and $A$, by the Hales-Jewett Theorem (Theorem \ref{thm.dhj}) there is $z \in A^\mathcal{Q}$ and $\emptyset \neq \mathcal{W} \subset \mathcal{Q}$ such that 
\begin{equation*}
\mathcal{L}:=\{x \in A^\mathcal{Q}: \exists a \in A\text{ with } x|_\mathcal{W}\equiv a \text{ and }x|_{\mathcal{Q} \setminus \mathcal{W}} =z|_{\mathcal{Q} \setminus \mathcal{W}}\}
\end{equation*}
is monochromatic. Let
\begin{equation*}
\mathcal{E}:= \{Z\} \cup \{S_1,\dots,S_d: S\in \mathcal{Q}\setminus \mathcal{W}\} ,
\end{equation*}
and
\begin{equation*}
\mathcal{P}':=\left\{P_1,\dots,P_d\right\} \text{ where } P_i=\bigcup_{S \in \mathcal{W}}{S_i}.
\end{equation*}
Since $\mathcal{W}$ is non-empty the set $P_i$ is non-empty, and by design the $P_i$s are pairwise disjoint so $\mathcal{P}'$ is a partition of size $d$ as required. Since $1 \in F$,
\begin{equation*}
t_0':=t_Z+\sum_{S \in \mathcal{Q} \setminus \mathcal{W}}{\sum_{i=1}^d{z(S)_it_{S_i}}} \in \sum_{P \in \mathcal{E}}{F.t_P}.
\end{equation*}
Now, suppose that $a \in F^d$, so that there is $x \in \mathcal{L}$ with $x(S)_i=a_i$ for all $1 \leq i \leq d$ and $S \in \mathcal{W}$, and $x(S)_i=z(S)_i$ for all $1 \leq i \leq d$ and $S \in \mathcal{Q}\setminus \mathcal{W}$. Then
\begin{equation*}
\phi(x)=t_Z+\sum_{S \in \mathcal{Q} \setminus \mathcal{W}}{\sum_{i=1}^d{x(S)_it_{S_i}}} +\sum_{S \in \mathcal{W}}{\sum_{i=1}^d{x(S)_it_{S_i}}}=t_0'+\sum_{S \in \mathcal{W}}{\sum_{i=1}^d{a_it_{S_i}}} = t_0'+\sum_{i=1}^d{a_it_{P_i}}.
\end{equation*}
Since $a$ was arbitrary it follows that $t_0' + F.t_{P_1}+\cdots +F.t_{P_d}$ is monochromatic as claimed, and the result is proved.
\end{proof}

\begin{proposition}\label{prop.end}
For $m,r,s \in\N^*$ there is $\Psi(m,r,s) \in \N^*$ such that if $F \subset \F$ with $0,1 \in F$ has size at most $s$, $\F^*$ is $r$-coloured, $t \in \F^I$ is $F^{mr}$-independent, and $|I| \geq \Psi(m,r,s)$, then there is $t' \in \F^m$ such that $S(m,F;t')$ is monochromatic. 
\end{proposition}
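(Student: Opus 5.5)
We iterate Lemma \ref{lem.proit} and then pigeonhole on colours, as in the proof of Deuber's theorem. Set $L:=m r$, so we will run $L$ rounds. Define $d_L:=1$ and $d_{k-1}:=\Phi(s^{k},d_k,r)$ for $k=L,\dots,1$; this is the backwards recursion that gives the number of parts needed at each stage. Put $\Psi(m,r,s):=d_0$. Start with $\mathcal{P}_0$ the partition of $I$ into singletons, so $|\mathcal{P}_0|=|I|\geq d_0$.

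At stage $k$ we have a partition $\mathcal{P}_{k}$ of a subset of $I$ with $|\mathcal{P}_k|\geq d_k$, each part a union of original singletons. We apply Lemma \ref{lem.proit} with the set $F^{k+1}$ (of size at most $s^{k+1}$, containing $1$, and with $t$ still $F^{k+1}$-independent since $F^{k+1}\subset F^{mr}$ because $0,1\in F$). This yields $\mathcal{E}_k\subset\mathcal{P}_k$, a new partition $\mathcal{P}_{k+1}$ with $|\mathcal{P}_{k+1}|\geq d_{k+1}$, an element $u_k:=t_0'\in\sum_{P\in\mathcal{E}_k}F^{k+1}.t_P$, and a colour $C_k$ with
\begin{equation*}
u_k+\sum_{P\in\mathcal{P}_{k+1}}F^{k+1}.t_P\subset C_k.
\end{equation*}
The key bookkeeping is that $u_k$ is a combination of $t_P$ over parts $P$ lying in the union of $\mathcal{P}_k$, which is disjoint from $\bigcup\mathcal{E}_k$. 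So each later $u_{k'}$ lies in $\sum_{P\in\mathcal{P}_{k+1}}F^{k'+1}.t_P$, and the supports of $u_0,u_1,\dots$ are nested. This is the reason the lemma is applied with growing powers of $F$: it absorbs the coefficients in $u_{k'}$.

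After $L=mr$ rounds, pigeonhole gives indices $k_1<\dots<k_m$ with $C_{k_1}=\dots=C_{k_m}=C$. Choose $t'_j:=\lambda_j u_{k_j}$ with suitable nonzero scalars, taking $\lambda_j=1$ if the powers line up. We then check that
\begin{equation*}
t'_j+F.t'_{j+1}+\dots+F.t'_m\subset u_{k_j}+\sum_{P\in\mathcal{P}_{k_j+1}}F^{k_j+1}.t_P\subset C.
\end{equation*}
This inclusion is the main obstacle. The sum $F.u_{k_{j+1}}+\dots+F.u_{k_m}$ must land in $\sum_{P\in\mathcal{P}_{k_j+1}}F^{k_j+1}.t_P$. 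Several $u$'s may contribute to the same $t_P$, so coefficients add, and $F$ is not closed under addition. To handle this, use the fact that at each stage $u_{k'}$ involves only $t_P$ with $P\in\mathcal{E}_{k'}$, and these sets are distinct parts coming from distinct stages. Each original part $P\in\mathcal{P}_{k_j+1}$ is a union of finer parts, and different $u_{k'}$ use disjoint finer parts. For $k'>k_j$, then, $u_{k'}$ is a combination $\sum f_Q t_Q$ over disjoint sets $Q$, and multiplying by $F$ gives coefficients in $F^{k'+2}$.

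Since the parts in $\mathcal{P}_{k_j+1}$ are unions of finer sets, membership in $\sum_{P}F^{k_j+1}t_P$ requires a constant coefficient across each $P$, and that fails. The fix is to reverse the roles: use the sharpened form of (iii) in the proof of Lemma \ref{lem.proit}. There, monochromaticity actually holds for $t_0'+\sum_{S\in\mathcal{W}}\sum_i a_i t_{S_i}$ with $a$ constant on $\mathcal{W}$, and by Hales–Jewett more generally on combinations constant on blocks. So I would strengthen the induction to keep track of the full Hales–Jewett line structure, or alternatively apply the lemma at stage $k$ with $d_k$ so large that the following $u$'s are each constant on the parts of the previous partition. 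Making that constancy-on-parts invariant precise is where I expect the real work, and the choice of powers $F^{k}$ should be driven by it.
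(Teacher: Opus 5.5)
Your outline matches the paper's: a backward recursion for the part counts, iterating Lemma~\ref{lem.proit} starting from the singletons, pigeonholing over $mr$ colours, and taking $t'_j:=u_{k_j}$ with $k_1<\dots<k_m$ so that later elements are absorbed into earlier monochromatic sets. However, \emph{the powers of $F$ run in the wrong direction}, and this is exactly where your argument breaks.

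You apply the lemma at stage $k$ with $F^{k+1}$. So the monochromatic set at stage $k_j$ only allows coefficients from $F^{k_j+1}$, while for $k'>k_j$ the set $F.u_{k'}$ has coefficients in $F^{k'+2}$. Since $F$ is not closed under multiplication, $F^{k'+2}\not\subseteq F^{k_j+1}$ in general; for instance $F=\{0,1,2\}$ in a large prime field. Hence the inclusion you need is false as you have set things up, and choosing scalars $\lambda_j$ does not help. The final step is left open (``where I expect the real work''), and the suggested repairs via a strengthened Hales--Jewett invariant are not developed, so the proposition is not proved.

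The obstacle you diagnose is also not real, because you have the refinement backwards. Each part of $\mathcal{P}_{k+1}$ is a union of parts of $\mathcal{P}_k$. So for $k'>k_j$, every $P\in\mathcal{E}_{k'}\subset\mathcal{P}_{k'}$ is a union of parts $Q\in\mathcal{P}_{k_j+1}$, and $f.t_P=\sum_{Q\subset P}f.t_Q$ is a legitimate element of a sum over $Q\in\mathcal{P}_{k_j+1}$, with coefficient $0\in F$ on the remaining parts. The sets $\bigcup\mathcal{E}_{k'}$ are pairwise disjoint (not nested, as you wrote), so no two $u$'s contribute to the same $t_Q$ and coefficients never add. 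No constancy-on-parts invariant or extra Hales--Jewett structure is needed.

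The missing idea is simply to reverse the powers, as the paper's downward indexing does. Apply the lemma first, to the singletons, with the largest power $F^{mr}$; this is why the hypothesis is $F^{mr}$-independence. At stage $k$ use $F^{L-k}$, and accordingly set $d_{k-1}:=\Phi(s^{L-k+1},d_k,r)$. Then $u_{k'}\in\sum_{P\in\mathcal{E}_{k'}}F^{L-k'}.t_P$, so for $k'>k_j$ the coefficients of $F.u_{k'}$ lie in $F^{L-k'+1}\subseteq F^{L-k_j}$, using $1\in F$. Combined with the previous paragraph this gives
\[ u_{k_j}+F.u_{k_{j+1}}+\dots+F.u_{k_m}\subset u_{k_j}+\sum_{Q\in\mathcal{P}_{k_j+1}}F^{L-k_j}.t_Q\subset C, \]
so $t'_j:=u_{k_j}$ works directly, with no scalars.
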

\begin{proof}
Set $n_0:=1$ and
\begin{equation*}
n_{i+1}:=\Phi(s^{i+1},n_i,r) \text{ for }0 \leq i <mr.
\end{equation*}
Let $\Psi(m,r,s):=n_{mr}$. Let $\mathcal{P}^{(mr)}$ be the singletons of $I$, $E^{(mr)}:=\emptyset$ and $z^{(mr)}:=0_\F$.  We construct partitions $\mathcal{P}^{(i)}$ of size at least $n_i$, sets $E^{(i)}$, and elements $z^{(i)}$ by downward recursion.

Suppose that we are at step $i+1$.  Apply Lemma \ref{lem.proit} with the Lemma's $F$ and $\mathcal{P}$ equal to $F^{i+1}$ and $\mathcal{P}^{(i+1)}$ respectively. Since $i+1\leq mr$ and $1 \in F$ we have $F^{i+1} \subset F^{mr}$ and so $t$ is $F^{i+1}$-independent. Moreover,  $\mathcal{P}^{(i+1)}$ has size at least $n_{i+1}$. Hence the Lemma gives a set $\mathcal{E}^{(i)} \subset \mathcal{P}^{(i+1)}$ and a partition $\mathcal{P}^{(i)}\cup\{\bigcup{\mathcal{E}^{(i)} }\}$ with $|\mathcal{P}^{(i)}| \geq n_i$, in which each set in $\mathcal{P}^{(i)}$ is a union of sets in $\mathcal{P}^{(i+1)}$, and a
\begin{equation}\label{eqn.iy}
z^{(i)} \in \sum_{P \in \mathcal{E}^{(i)}}{F^{i}.t_P}
\end{equation}
with
\begin{equation}\label{ekey1}
z^{(i)}+\sum_{P \in \mathcal{P}^{(i)}}{F^{i}.t_P} \subset C^{(i)}
\end{equation}
for some colour class $C^{(i)}$.

By design,
\begin{equation}\label{ekey}
\text{ whenever }i<i', \text{the sets in } \mathcal{P}^{(i)} \text{ are unions of sets in } \mathcal{P}^{(i')}.
\end{equation}
We have $\mathcal{E}^{(i)}\subset \mathcal{P}^{(i+1)}$, and so by (\ref{ekey}), $\bigcup{\mathcal{E}^{(i)}}$ is a union of sets in $\mathcal{P}^{(i')}$. However, $\bigcup{\mathcal{E}^{(i')}}$ is disjoint from all sets in $\mathcal{P}^{(i')}$, and so
\begin{equation}\label{ekey5}
\bigcup{\mathcal{E}}^{(i)} \cap \bigcup{\mathcal{E}}^{(i')} = \emptyset\text{  whenever } i<i'.
\end{equation}

By the pigeonhole principle there is $0 \leq i_1 <i_2 <\dots <i_m \leq mr$ such that
\begin{equation*}
C^{(i_1)}=\cdots = C^{(i_m)}=:C.
\end{equation*}
Now suppose $1 \leq k\leq m$, and $1 \leq j <k$.  By (\ref{eqn.iy}), and then (\ref{ekey}) and the fact that both $\mathcal{E}^{(i_j)}$ and $\mathcal{P}^{(i_k)}$ are partitions, we have
\begin{equation*}
F.z^{(i_j)} \subset \sum_{P \in \mathcal{E}^{(i_j)}}{F^{i_j+1}.t_P} \subset \sum_{ P \in \mathcal{P}^{(i_k)}:P \subset \bigcup{\mathcal{E}^{(i_j)}}}{F^{i_j+1}.t_P}.
\end{equation*}
Since $1 \in F$ we have $F^{{i_j}+1} \subset F^{i_k}$, and since $0 \in F$ too, adding the above using the disjointness in (\ref{ekey5}) we get
\begin{equation*}
F.z^{(i_{k-1})}+\cdots + F.z^{(i_1)}  \subset \sum_{P \in \mathcal{P}^{(i_k)}}{F^{i_k}.t_P}.
\end{equation*}
Finally, by (\ref{ekey1}) we have
\begin{equation*}
z^{(i_k)} + F.z^{(i_{k-1})}+\cdots + F.z^{(i_1)}\subset z^{(i_k)} + \sum_{P \in  \mathcal{P}^{(i_k)}}{F^{i_k}.t_P} \subset C.
\end{equation*}
Set $t'_1:=z^{(i_m)}$, $t'_2:=z^{(i_{m-1})}$,\dots, $t_m':=z^{(i_1)}$ and then we have shown what is required.
\end{proof}

\section{Proofs of Theorems \ref{thm.main2} \& \ref{thm.main3}}\label{sec.notnetc}

The main result of this section is Theorem \ref{thm.maincore} from which we will establish Theorems \ref{thm.main2} \& \ref{thm.main3} at the end. To prove Theorem \ref{thm.maincore} we use standard Fourier arguments, the basics of which we first have to record. 

Throughout this section we write $G$ for a finite Abelian group (which we think of as topologically compact and Hausdorff), and $\Gamma$ for a dual group of $G$ (which we think of as topologically discrete).

The duality is denoted by the pairing $\langle x,\gamma\rangle$ for elements $x \in G$ and $\gamma \in \Gamma$. The homomorphisms $G \rightarrow S^1:=\{z \in \C: |z|=1\}$ are exactly the maps $x \mapsto \langle x,\gamma\rangle$, and similarly for the homomorphisms $\Gamma \rightarrow S^1$.

We work with measures on $G$ and we take them all to have the $\sigma$-algebra $\mathcal{P}(G)$. For any non-empty $B \subset G$ we write $\mu_B$ for the uniform probability measure on $G$ supported on $B$. Given measures $\mu$ and $\nu$ on $G$ we define
\begin{equation*}
\wt{\mu}(E):=\overline{\mu(-E)}\text{ and }\mu \ast \nu(E):=\int{1_E(x+y)\dd \mu(x)\dd\nu(y)} \text{ for all }E \subset G.
\end{equation*}
We norm these measures by $\|\mu\|:=\int{\dd|\mu|}$ which makes them into a Banach algebra. The Fourier-Stieltjes transform of $\mu$ at $\gamma \in \Gamma$ is defined to be
\begin{equation*}
\wh{\mu}(\gamma)=(\mu)^\wedge(\gamma):=\int{\overline{\langle x,\gamma\rangle}\dd\mu(x)}.
\end{equation*}
For $f,g \in L_1(\mu_G)$ and a measure $\mu$ we define
\begin{equation*}
f\ast g(x):=\int{f(y)g(x-y)\dd\mu_G(y)} \text{ and } \mu \ast f(x)=f \ast \mu(x):=\int{f(x-y)\dd\mu(y)},
\end{equation*}
for all $x \in G$, and the Fourier transform of $f$ at $\gamma \in \Gamma$ is defined to be
\begin{equation*}
\wh{f}(\gamma)=(f)^\wedge(\gamma):=\int{f(x)\overline{\langle x,\gamma\rangle}\dd \mu_G(x)}=(f\dd \mu_G)^\wedge(\gamma).
\end{equation*}
With an eye to the Fourier inversion formula, for $g \in \ell_1(\Gamma)$ and $x \in G$ we write
\begin{equation*}
\wh{g}(x):=\sum_{\gamma \in \Gamma}{g(\gamma)\langle x,\gamma\rangle}.
\end{equation*}
Rudin \cite{rud::1} covers the Fourier transform in the style of this notation, though it has many more details about the analysis; Tao and Vu \cite[Chapter 4]{taovu::} covers the facts we use about the Fourier transform and is closer in spirit to our work here.

For $\Lambda \subset \Gamma$ and $\delta >0$ we define the Bohr set with frequency set $\Lambda$ and width $\delta$ to be
\begin{equation*}
B(\Lambda,\delta):=\{x \in G: |\langle x,\lambda\rangle -1|<\delta \text{ for all }\lambda \in \Lambda\}.
\end{equation*}
This is a slightly different definition to that in \cite[Definition 4.16, p187]{taovu::}, but the two definitions are equivalent as we shall see in the proof of the next lemma.
\begin{lemma}[Sizes of Bohr sets]\label{lem.bohrsize}
Suppose that $\Lambda \subset \Gamma$ has size $d$ and $\delta \in (0,1]$. Then
\begin{equation*}
\mu_G(B(\Lambda,2\delta)) \leq \exp(\newconstbig{grow}d)\mu_G(B(\Lambda,\delta))\text{ and }\mu_G(B(\Lambda,\delta))\geq (\delta/2)^{O(d)}.
\end{equation*}
\end{lemma}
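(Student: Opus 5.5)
The plan is to use the standard covering and pigeonhole argument for Bohr sets, after first relating $B(\Lambda,\delta)$ to the torus box picture. Write $\Lambda=\{\lambda_1,\dots,\lambda_d\}$ and consider the homomorphism
\begin{equation*}
\theta:G \rightarrow (\R/\Z)^d,\quad x\mapsto (\theta_1(x),\dots,\theta_d(x)),
\end{equation*}
where $\langle x,\lambda_i\rangle=\exp(2\pi i\theta_i(x))$. Since $|e^{2\pi i t}-1|=2|\sin(\pi t)|$, the condition $|\langle x,\lambda_i\rangle-1|<\delta$ is equivalent to $\|\theta_i(x)\|_{\R/\Z}<\rho(\delta):=\pi^{-1}\arcsin(\delta/2)$. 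So $B(\Lambda,\delta)=\theta^{-1}(Q_{\rho(\delta)})$, where $Q_\rho$ is the open cube of side $2\rho$ centred at $0$ in $(\R/\Z)^d$; this is the promised equivalence with the definition in Tao--Vu. For $\delta\in(0,1]$ the function $\rho$ is comparable to $\delta$: we have $\delta/(2\pi)\leq\rho(\delta)\leq \delta/6$ (using $\arcsin(1/2)=\pi/6$ and convexity of $\arcsin$ on $[0,1/2]$). The upper bound for $2\delta$ is $\rho(2\delta)\leq 1/6$ since $2\delta\leq 2$ gives $\arcsin\leq\pi/2$; this needs slightly more care, but in any case $\rho(2\delta)\leq \frac{\pi}{2}\rho(\delta)\cdot\frac{2}{\pi}\cdot 2\leq 4\rho(\delta)$ by concavity-type comparisons of $\arcsin$.

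\textbf{Doubling.} I cover $Q_{\rho(2\delta)}$ by $K=\exp(O(d))$ translates $y_k+Q_{\rho(\delta)/2}$. This works because $\rho(2\delta)/\rho(\delta)=O(1)$: split each coordinate interval into $O(1)$ pieces of length less than $\rho(\delta)$. For each $k$ with $X_k:=\theta^{-1}(y_k+Q_{\rho(\delta)/2})$ non-empty, pick $x_k\in X_k$. Then $X_k-x_k\subset \theta^{-1}(Q_{\rho(\delta)})=B(\Lambda,\delta)$, so $\mu_G(X_k)\leq\mu_G(B(\Lambda,\delta))$. Summing over $k$ gives
\begin{equation*}
\mu_G(B(\Lambda,2\delta))\leq K\mu_G(B(\Lambda,\delta)),
\end{equation*}
which is the first bound with $\refconstbig{grow}$ an absolute constant.

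\textbf{Lower bound.} Partition $(\R/\Z)^d$ into $M^d$ half-open cubes of side $1/M$ with $M:=\lceil 1/\rho(\delta)\rceil+1=O(1/\delta)$. By pigeonhole, some cube $R$ has $\mu_G(\theta^{-1}(R))\geq M^{-d}$. Fix $x_0\in\theta^{-1}(R)$; then $\theta^{-1}(R)-x_0\subset\theta^{-1}(Q_{\rho(\delta)})$, since differences of points of $R$ have coordinates of size below $1/M<\rho(\delta)$. Hence
\begin{equation*}
\mu_G(B(\Lambda,\delta))\geq M^{-d}\geq (\delta/2)^{O(d)}.
\end{equation*}

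The only delicate point is the elementary comparison between $\rho(2\delta)$ and $\rho(\delta)$ near $\delta=1$, where $2\delta$ approaches $2$ and $\arcsin$ has infinite slope. This still gives a bounded ratio: $\rho(2)=1/2$ while $\rho(1)=1/6$. So, rather than using derivative estimates, I would check the ratio bound directly through the estimates $\rho(\delta)\geq\delta/(2\pi)$ and $\rho(2\delta)\leq 1/2$, which give $\rho(2\delta)/\rho(\delta)\leq \pi/\delta$. Since that is not bounded as $\delta\to 0$, I would instead use that $\arcsin(u)/u$ is increasing and at most $\pi/2$ on $(0,1]$. This gives
\begin{equation*}
\rho(2\delta)\leq \tfrac{\pi}{2}\cdot\tfrac{\delta}{\pi}=\tfrac{\delta}{2}\quad\text{and}\quad \rho(\delta)\geq \tfrac{\delta}{2\pi},
\end{equation*}
so $\rho(2\delta)/\rho(\delta)\leq\pi$ uniformly in $\delta$.
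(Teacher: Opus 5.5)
Your argument is correct and is essentially the paper's. The paper sandwiches $B(\Lambda,\delta)$ between Tao--Vu Bohr sets via $4\|t\|_{\R/\Z}\le|e^{2\pi i t}-1|\le 2\pi\|t\|_{\R/\Z}$, which is the same ratio-$\pi$ comparison you end with, and then cites Tao--Vu's Lemma 4.19, whose covering and pigeonhole proof you have written out directly. The only blemishes are the side remarks in your first paragraph: $\rho(2\delta)\le 1/6$ is false for $\delta>1/2$, and the chain giving $\rho(2\delta)\le 2\rho(\delta)$ is reversed by convexity of $\arcsin$. Your final bound $\rho(2\delta)\le\delta/2\le\pi\rho(\delta)$ supersedes both.
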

\begin{proof}
We write $\|x\|_{\R/\Z}:=\min\{|\theta|: \theta \in x\}$ where we think of $x$ as a coset of $\Z$, and then for $\gamma \in \Gamma$ and $x \in G$ we have
\begin{equation*}
4\left\|\frac{1}{2\pi}\log \langle x,\gamma\rangle\right \|_{\R/\Z} \leq |\langle x,\gamma\rangle-1| \leq 2\pi\left\|\frac{1}{2\pi}\log \langle x,\gamma\rangle\right \|_{\R/\Z} .
\end{equation*}
With this we can apply \cite[Lemma 4.19, p188]{taovu::} to get
\begin{equation*}
\mu_G(B(\Lambda,\delta)) \geq (\delta/2\pi)^d \text{ and } \mu_G(B(\Lambda,2\delta)) \leq 16^d\mu_G(B(\Lambda,\delta)).
\end{equation*}
The lemma is proved.
\end{proof}
This growth lets us identify pairs of Bohr sets that are well-behaved:
\begin{lemma}[Regularity of Bohr sets]\label{lem.regular} Suppose $\delta,\eta \in (0,1]$ and $\Lambda \subset \Gamma$ has size $d$. Then there is $\delta_* \in [\delta/2,\delta]$ and $\delta' \in  (\Omega(\delta \eta/d),\delta]$ such that
\begin{equation*}
\mu_G(B(\Lambda,\delta_*)+B(\Lambda,\delta'))\leq (1+\eta)\mu_G(B(\Lambda,\delta_*)) \text{ and } B(\Lambda,\delta') \subset B(\Lambda,\delta_*).
\end{equation*}
\end{lemma}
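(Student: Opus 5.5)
Throughout, write $f(\rho):=\mu_G(B(\Lambda,\rho))$, which is non-decreasing in $\rho$. I would combine the doubling bound of Lemma \ref{lem.bohrsize} with a pigeonholing argument; this is the standard Bourgain regularity argument, in a form that avoids limits and continuity.

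\textbf{Step 1: choose the radius $\delta_*$.} Consider the scales $\delta_* \in [\delta/2,\delta]$. I would aim for one where $f$ grows slowly over a short range, i.e.
\begin{equation*}
f(\delta_*+\delta') \leq (1+\eta) f(\delta_*)
\end{equation*}
for a suitable small $\delta'$.

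Suppose no such $\delta_*$ exists for some step $\kappa$. Then for every $\rho$ in $[\delta/2,\delta-\kappa]$ we have $f(\rho+\kappa)>(1+\eta)f(\rho)$. Iterate this from $\rho_0:=\delta/2$ along $\rho_j:=\delta/2+j\kappa$ for $j=0,1,\dots,J$ with $J:=\lfloor \delta/(2\kappa)\rfloor$, which keeps every $\rho_j$ inside $[\delta/2,\delta]$. This gives
\begin{equation*}
f(\delta)\geq (1+\eta)^{J}f(\delta/2).
\end{equation*}
Lemma \ref{lem.bohrsize}, applied with $\delta/2$, gives $f(\delta)\leq \exp(\refconstbig{grow}d)f(\delta/2)$. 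Combining the two forces $J\log(1+\eta)\leq \refconstbig{grow}d$, so $J=O(d/\eta)$.

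Now choose $\kappa:=c\delta\eta/d$ with $c$ a small absolute constant. Then $J\geq \delta/(2\kappa)-1=d/(2c\eta)-1$, which exceeds the bound $O(d/\eta)$ once $c$ is small enough. This is a contradiction, so some $\delta_*=\rho_j\in[\delta/2,\delta]$ satisfies $f(\delta_*+\kappa)\leq(1+\eta)f(\delta_*)$. Here $\log(1+\eta)\geq \eta/2$ is used because $\eta\le 1$.

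\textbf{Step 2: choose $\delta'$ and check the conclusions.} Set $\delta':=\kappa$, so $\delta'=\Omega(\delta\eta/d)$. Taking $c\le 1/2$ gives $\delta'\leq \delta/2\leq\delta_*$, and hence $B(\Lambda,\delta')\subset B(\Lambda,\delta_*)$.

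For the sumset, use the triangle inequality on $S^1$:
\begin{equation*}
|\langle x+y,\lambda\rangle-1|\leq |\langle x,\lambda\rangle-1|+|\langle y,\lambda\rangle-1|.
\end{equation*}
This holds because $|\langle x,\lambda\rangle|=1$. It gives $B(\Lambda,\delta_*)+B(\Lambda,\delta')\subset B(\Lambda,\delta_*+\delta')$. Since $\delta_*+\delta'$ could exceed $\delta$, I would run Step 1 on the range $\delta_*\in[\delta/2,\delta-\kappa]$, which the iteration above already respects. Then
\begin{equation*}
\mu_G(B(\Lambda,\delta_*)+B(\Lambda,\delta'))\leq f(\delta_*+\delta')\leq (1+\eta)f(\delta_*),
\end{equation*}
which is the required bound.

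\textbf{Main obstacle.} The only delicate point is the pigeonhole in Step 1. The chain of radii must stay within $[\delta/2,\delta]$ so that the single doubling estimate of Lemma \ref{lem.bohrsize} controls the total growth. The constant $c$ must also be chosen so that the number of steps $J$ beats $\refconstbig{grow}d/\log(1+\eta)$. Also, $\delta\le1$ guarantees $2\cdot(\delta/2)\le 1$, so Lemma \ref{lem.bohrsize} is applicable at the scale $\delta/2$. Beyond this, only strict versus non-strict inequalities in the definition of Bohr sets need care, and the subadditivity above handles those.
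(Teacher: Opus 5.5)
Your proposal is correct and follows essentially the paper's argument. The paper takes the progression $\delta_i=\delta/2+i\delta'$ with $\delta'=\delta/2kd$ and $k=O(\eta^{-1})$, bounds the telescoping product of the ratios $\mu_G(B(\Lambda,\delta_i+\delta'))/\mu_G(B(\Lambda,\delta_i))$ by $\exp(O(d))$ using Lemma \ref{lem.bohrsize}, and averages. You run the same pigeonhole as a proof by contradiction, and you finish with the same containment $B(\Lambda,\delta_*)+B(\Lambda,\delta')\subset B(\Lambda,\delta_*+\delta')$.
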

\begin{proof}
Let $k=O(\eta^{-1})$ be a natural number such that $\exp(\refconstbig{grow}/k) \leq 1+\eta$ (where $\refconstbig{grow}$ is as in Lemma \ref{lem.bohrsize}), and let $\delta':=\delta/2kd$ and $\delta_i:=\delta/2+i\delta'$ for $0 \leq i \leq kd-1$. Then by Lemma \ref{lem.bohrsize} we have
\begin{equation*}
\prod_{i=0}^{kd-1}{\frac{\mu_G(B(\Lambda,\delta_i+\delta'))}{\mu_G(B(\Lambda,\delta_i))}} \leq \frac{\mu_G(B(\Lambda,\delta))}{\mu_G(B(\Lambda,\delta/2))} \leq \exp(\refconstbig{grow}d).
\end{equation*}
By averaging there is some $\delta_*=\delta_i$ such that 
\begin{equation*}
\frac{\mu_G(B(\Lambda,\delta_*+\delta'))}{\mu_G(B(\Lambda,\delta_*))} \leq 1+\eta.
\end{equation*}
The result follows since $B(\Lambda,\delta_*)+B(\Lambda,\delta') \subset B(\Lambda,\delta_*+\delta')$.
\end{proof}
The previous lemma gives a plentiful supply of sets that behave enough like groups that many arguments that work for groups can be `localised' to this approximate setting.

Parseval's theorem is frequently used to bound the size of sets of large Fourier coefficients -- see \cite[p204, (4.38)]{taovu::} for the sort of thing we have in mind.  Green and Tao also note this at \cite[p108]{gretao::1}, where they additionally localised this technique. We record the result of their ideas below:

\begin{lemma}[Local Parseval bound]\label{lem.localbessel}
Suppose $\epsilon,\sigma,\eta,\delta \in (0,1]$,
\[
\begin{array}{l l}
 \mathord{\bullet}\,\mu_G({B_0}+B_1) \leq 2 \mu_G({B_0})& \mathord{\bullet}\, \mu_G(B_2+B_1)\leq (1+\eta)\mu_G(B_1)\;\tikzmark{start}
\end{array}
\]
\begin{tikzpicture}[overlay, remember picture, >=stealth]

\node[font=\tiny, right=3cm of pic cs:start,  align=left, red] (explain)
{ these measure\\  the quality \\ of the local\\ approximation};

\draw[->, red,rounded corners=5pt]
  ([yshift=2pt, xshift=-0.5cm]explain.west) -- ([yshift=2pt]pic cs:start);
\end{tikzpicture}
and $\mu_{B_0}(S)\geq \sigma$. Then there is a Bohr set $B_3$ with frequency set of size at most $O(\epsilon^{-2}\sigma^{-1})$ and width $\delta$ such that
\begin{equation}\label{eqn.estimate}
|1-\langle x,\gamma\rangle|\leq \newconstbig{12} \eta\epsilon^{-3}\sigma^{-3/2}+\delta \textrm{ for all }x \in (B_2-B_2)\cap B_3 
\end{equation}
whenever $|(1_S\dd\mu_{B_0})^\wedge(\gamma)| \geq \epsilon \mu_{B_0}(S)$. 
\end{lemma}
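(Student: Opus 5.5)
The plan is to cover the large spectrum of $f:=1_S\dd\mu_{B_0}$ by a small set of frequencies modulo ``approximate annihilators'' of $B_2$. Write
\[\mathrm{Spec}:=\{\gamma:|\wh{f}(\gamma)|\geq \epsilon\sigma\},\]
where we may take $\sigma=\mu_{B_0}(S)$ since the hypothesis only gets weaker with smaller $\sigma$. Fix a threshold $\kappa$, to be chosen of size about $\epsilon^2\sigma$. Let $\Lambda\subset \mathrm{Spec}$ be maximal subject to
\[|\wh{\mu_{B_1}}(\lambda-\lambda')|<\kappa \quad\text{for all distinct } \lambda,\lambda'\in\Lambda.\]
We will take $B_3:=B(\Lambda,\delta)$. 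Two things then need to be shown: that $|\Lambda|=O(\epsilon^{-2}\sigma^{-1})$, and that the estimate (\ref{eqn.estimate}) holds on $(B_2-B_2)\cap B_3$.

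\emph{Size of $\Lambda$.} This is a localised Bessel inequality. We may assume $B_1$ is symmetric (Bohr sets are). Then $1_{B_0+B_1}\ast\mu_{B_1}\geq 1$ on $B_0$, and together with $\mu_G(B_0+B_1)\leq 2\mu_G(B_0)$ this gives the pointwise domination
\[\mu_{B_0}\leq 2\,\nu,\qquad \nu:=\mu_{B_0+B_1}\ast\mu_{B_1}.\]
Choose unimodular $c_\lambda$ with $c_\lambda\wh{f}(\lambda)=|\wh{f}(\lambda)|$. By Cauchy--Schwarz,
\[
\Big(\epsilon\sigma|\Lambda|\Big)^2\leq \Big|\int 1_S\sum_{\lambda} c_\lambda\overline{\langle x,\lambda\rangle}\dd\mu_{B_0}\Big|^2 \leq \sigma\int\Big|\sum_\lambda c_\lambda \langle x,\lambda\rangle\Big|^2\dd\mu_{B_0}.
\]
The right-hand side is at most
\[2\sigma\sum_{\lambda,\lambda'}c_\lambda\overline{c_{\lambda'}}\,\wh{\nu}(\lambda'-\lambda).\]
Since $|\wh{\nu}|\leq|\wh{\mu_{B_1}}|$, the off-diagonal terms are each at most $\kappa$, so
\[\epsilon^2\sigma^2|\Lambda|^2\leq 2\sigma(|\Lambda|+\kappa|\Lambda|^2).\]
Taking $\kappa:=\epsilon^2\sigma/4$ yields $|\Lambda|\leq 4\epsilon^{-2}\sigma^{-1}$.

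\emph{The estimate.} Let $\gamma\in\mathrm{Spec}$. By maximality of $\Lambda$ there is $\lambda\in\Lambda$ with $|\wh{\mu_{B_1}}(\gamma-\lambda)|\geq\kappa$; if $\gamma\in\Lambda$ take $\lambda=\gamma$. For $y\in B_2$, the hypothesis $\mu_G(B_2+B_1)\leq(1+\eta)\mu_G(B_1)$ gives
\[\|\delta_y\ast\mu_{B_1}-\mu_{B_1}\|\leq 2\eta.\]
Taking Fourier transforms, with $\theta:=\gamma-\lambda$,
\[|1-\langle y,\theta\rangle|\,|\wh{\mu_{B_1}}(\theta)|\leq 2\eta,\]
so $|1-\langle y,\theta\rangle|\leq 2\eta/\kappa$ on $B_2$. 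For $x=y-y'\in B_2-B_2$ the triangle inequality then gives $|1-\langle x,\theta\rangle|\leq 4\eta/\kappa$. If additionally $x\in B_3$, then $|1-\langle x,\lambda\rangle|<\delta$. Writing $\langle x,\gamma\rangle=\langle x,\lambda\rangle\langle x,\theta\rangle$,
\[|1-\langle x,\gamma\rangle|\leq \delta+\frac{16\eta}{\epsilon^2\sigma}.\]
Since $\epsilon,\sigma\leq 1$, this is at most $\refconstbig{12}\eta\epsilon^{-3}\sigma^{-3/2}+\delta$ for a suitable absolute constant.

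The main obstacle is the first step. The Bessel argument has to be localised to $B_0$ while keeping Fourier control through $B_1$, and the domination $\mu_{B_0}\leq 2\,\mu_{B_0+B_1}\ast\mu_{B_1}$ is the device that transfers the near-orthogonality of the characters in $\Lambda$ from $\wh{\mu_{B_1}}$ to the measure against which we actually integrate. It needs care with the symmetry of $B_1$ and with normalisations; the rest is bookkeeping.
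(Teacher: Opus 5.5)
Your proof is correct, and it takes a genuinely different route from the paper's. The paper builds $\Lambda$ greedily inside the large spectrum. At each stage $\Lambda$ stays $\frac{j}{k}$-orthogonal in the operator-norm sense $\|\wh{g}\|_{L_2(\mu)}\leq(1+K)^{1/2}\|g\|_{\ell_2(\Lambda)}$ for every $g$, where $\mu=\mu_{B_0+B_1}\ast\mu_{-B_1}$. The size bound comes from the adjoint (Bessel) inequality. When a frequency $\gamma$ cannot be added, the witnessing $g$ is arbitrary, so passing from it to a single $\lambda$ costs $\|g\|_{\ell_1}\leq\sqrt{k}\|g\|_{\ell_2}$. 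This only yields $|\wh{\mu_{B_1}}(\gamma-\lambda)|\geq k^{-3/2}$, which is where the $\epsilon^{-3}\sigma^{-3/2}$ comes from.

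You instead take a maximal set with pairwise $|\wh{\mu_{B_1}}(\lambda-\lambda')|<\kappa$. You then test the quadratic form only at the flat vector of phases of $\wh{f}$, where the off-diagonal contribution is at most $\kappa|\Lambda|^2$. This lets the threshold be $\kappa\asymp\epsilon^2\sigma\asymp 1/k$ rather than $k^{-3/2}$ and avoids the duality step. It also gives the stronger error $O(\eta\epsilon^{-2}\sigma^{-1})+\delta$, which implies the stated one since $\epsilon,\sigma\leq 1$. The paper's version transcribes Green and Tao's localised Parseval argument; yours is more elementary and quantitatively a little better.

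Two small repairs are needed, because the lemma is stated for arbitrary sets $B_0,B_1,B_2$, not Bohr sets.
\begin{enumerate}[label=(\roman*)]
\item $B_1$ need not be symmetric. Use $\nu:=\mu_{B_0+B_1}\ast\mu_{-B_1}$, as the paper does; since $|\wh{\mu_{-B_1}}|=|\wh{\mu_{B_1}}|$, nothing else changes.
\item Your step $\|\delta_y\ast\mu_{B_1}-\mu_{B_1}\|\leq 2\eta$ for $y\in B_2$ uses $B_1\subset B_2+B_1$, essentially $0\in B_2$, and it fails otherwise. For instance, take $B_2=\{y\}$ with $y+B_1$ disjoint from $B_1$. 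Instead, for $x=y-y'$ compare $y+B_1$ with $y'+B_1$ directly. Both lie in $B_2+B_1$, so $\|\mu_{y+B_1}-\mu_{y'+B_1}\|\leq 2\eta$, giving $|1-\langle x,\theta\rangle|\leq 2\eta/\kappa$ and saving a factor of $2$.
\end{enumerate}
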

\begin{proof}
Let $\mu:=\mu_{{B_0}+B_1} \ast \mu_{-B_1}$ so that by design and hypothesis if $S':=S\cap B_0$ then $\mu(S') \geq \frac{1}{2}\sigma$ and
\begin{equation}\label{deltalive}
\Delta:=\left\{\gamma:|(1_S\dd\mu_{B_0})^{\wedge}(\gamma)| \geq \epsilon \mu_{B_0}(S)\right\} = \left\{\gamma : |(1_{S'}\dd\mu)^\wedge(\gamma)| \geq \epsilon \mu(S')\right\}.
\end{equation}
Let $k:=\lfloor 4\epsilon^{-2}\sigma^{-1}\rfloor$. We say that a set $\Lambda \subset \Delta$ containing $1_\Gamma$, is $K$-orthogonal if
\begin{equation}\label{eqn.ju}
\|\wh{g}\|_{L_2(\mu)} \leq (1+K)^{1/2}\|g\|_{\ell_2(\Lambda)} \text{ for all } g \in \ell_2(\Lambda).
\end{equation}
Let $\Lambda_1:=\{1_\Gamma\}$ which is $0$-orthogonal (and $1_\Gamma \in \Delta$). Suppose that we have defined $\Lambda_1,\dots,\Lambda_j$ such that $\Lambda_i$ is $\frac{i-1}{k}$-orthogonal for all $i \leq j$. If there is some $\gamma \in \Delta\setminus \Lambda_j$ such that $\Lambda_j \cup \{\gamma\}$ is $\frac{j}{k}$-orthogonal then let $\Lambda_{j+1}:=\Lambda_j \cup \{\gamma\}$; otherwise, terminate the iteration. 

Suppose that $\Lambda_j$ is defined for all $j \leq k+1$. Then $\Lambda_{k+1}$ is $1$-orthogonal. By (\ref{eqn.ju}) the map $\ell_2(\Lambda_{k+1}) \rightarrow L_2(\mu);g \mapsto \wh{g}$ has norm at most $\sqrt{2}$, and hence by duality the adjoint $L_2(\mu) \mapsto \ell_2(\Lambda_{k+1}); f \mapsto (f \dd \mu)^\wedge|_{\Lambda_{k+1}}$ also has norm at most $\sqrt{2}$. It follows from (\ref{deltalive}) that
\begin{equation*}
(k+1) \left(\epsilon \mu(S')\right)^2 \leq \sum_{\lambda \in \Lambda_k}{|(1_{S'}\dd\mu)^\wedge(\lambda)|^2} \leq 2 \|1_{S'}\|_{L_2(\mu)}^2 \leq 2\mu(S').
\end{equation*}
This contradicts the value of $k$, and hence the iterative construction of $\Lambda_j$ terminates for some $j\leq k$. In this case set $\Lambda:=\Lambda_j$, and let $B_3$ be the Bohr set with frequency set $\Lambda$ and width $\delta$.

Suppose that $\gamma \in \Delta$. Our aim is to show that (\ref{eqn.estimate}) holds.  If $\gamma \in \Lambda$ then we are certainly done so we may assume that $\gamma \in \Delta \setminus \Lambda$.  Since $\Lambda \cup \{\gamma\}$ is not $\frac{j}{k}$-orthogonal (since the iteration terminated), there is $g \in \ell_2(\Lambda)$ and $\nu \in \C$ such that
\begin{equation*}
\int{|\wh{g} + \nu \gamma |^2\dd\mu} > \left(1+\frac{j}{k}\right)(\|g\|_{\ell_2(\Lambda)}^2 + |\nu|^2).
\end{equation*}
Multiplying this out and using the $\frac{j-1}{k}$-orthogonality of $\Lambda$ we get that
\begin{eqnarray*}
2\Re \langle \wh{g},\nu\gamma \rangle_{L_2(\mu)} & > & \left(1+\frac{j}{k}\right)(\|g\|_{\ell_2(\Lambda)}^2 + |\nu|^2)- \left(1+\frac{j-1}{k}\right)\|g\|_{\ell_2(\Lambda)}^2- |\nu|^2\\ & \geq & \frac{1}{k}(\|g\|_{\ell_2(\Lambda)}^2 + |\nu|^2) \geq  \frac{2}{k}\|g\|_{\ell_2(\Lambda)}|\nu|.
\end{eqnarray*}
Since the inequality is strict $\nu \neq 0$ and so dividing out we get that 
\begin{equation*}
 \frac{1}{k}\|g\|_{\ell_2(\Lambda)} < |\langle \wh{g},\gamma \rangle_{L_2(\mu)}| = \left|\sum_{\lambda \in \Lambda}{g(\lambda)\wh{\mu}(\gamma -\lambda)}\right| \leq \|g\|_{\ell_1(\Lambda)}\sup_{\lambda \in\Lambda}{|\wh{\mu}(\gamma-\lambda)|}.
\end{equation*}
Cauchy-Schwarz tells us that $\|g\|_{\ell_1(\Lambda)} \leq \sqrt{k}\|g\|_{\ell_2(\Lambda)}$, and hence there is some $\lambda \in \Lambda$ such that
\begin{equation*}
 |(\mu_{B_1})^\wedge(\gamma-\lambda)|\geq |\wh{\mu}(\gamma-\lambda)|  \geq k^{-3/2}.
\end{equation*}
Now, suppose $x \in B_2-B_2$ so that there are $s,t \in B_2$ with $x=s-t$. Then
\begin{align*}
|\langle s,\lambda-\gamma\rangle-\langle t,\lambda-\gamma\rangle||(\mu_{B_1})^\wedge(\gamma-\lambda)| &= |(\mu_{t+{B_1}})^\wedge(\gamma-\lambda) - (\mu_{s+{B_1}})^\wedge(\gamma-\lambda)|\\ &  \leq \frac{\mu_G((t+{B_1}) \triangle (s+{B_1}))}{\mu_G({B_1})} \leq 2\eta.
\end{align*}
Since $|\langle s,\lambda-\gamma\rangle-\langle t,\lambda-\gamma\rangle|=|\langle x,\lambda\rangle-\langle x,\gamma\rangle|$ we therefore conclude that
\begin{equation*}
|\langle x,\lambda\rangle-\langle x,\gamma\rangle| \leq 2\eta k^{3/2} \textrm{ for all } x \in B_2-B_2,
\end{equation*}
which gives us (\ref{eqn.estimate}) by the triangle inequality since $\lambda \in \Lambda$. The result is proved.
\end{proof}

We can now establish our main iteration lemma.

\begin{lemma}[Iteration lemma]\label{lem.itlem}
Suppose that $\alpha,\sigma \in (0,1]$,
\[
\begin{array}{l l}
\mathord{\bullet}\,\mu_G(B_0+B_1) \leq (1+\newconstlittle{e}\alpha)\mu_G(B_0) & \mathord{\bullet}\, \mu_G(B_1+B_2)\leq 2\mu_G(B_1)\\[5pt]
\mathord{\bullet}\,\mu_G(B_2+B_3)\leq (1+\newconstlittle{f}(\alpha \sigma)^{\newconstbig{g}})\mu_G(B_2) & 
\end{array}
\]
and $\mu_{B_0}(A)=\alpha$, $x_0 \in B_1$, and $\mu_{B_1-x_0}(S)=\sigma$. Then either
\begin{enumerate}[label=(\roman*)]
\item we have
\begin{equation*}
\langle 1_S,1_A \ast 1_{-A}\rangle_{L_2(\mu_G)} \geq \frac{1}{2}\sigma \alpha^2\mu_G(B_1)\mu_G(B_0);
\end{equation*}
\item or there is a Bohr set $B_4$ with a frequency set of size $O(\alpha^{-2}\sigma^{-1})$ and width $\Omega(1)$ such that for any probability measure $\mu$ supported on $B_1\cap (B_3-B_3) \cap B_4$ we have
\begin{equation*}
\|1_A\ast \mu\|_\infty \geq (1+\newconstlittle{inc})\alpha.
\end{equation*}
\end{enumerate}
\end{lemma}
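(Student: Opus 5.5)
We follow the standard density-increment dichotomy. Write $A_0:=A\cap B_0$ and $f:=1_{A}-\alpha 1_{B_0}$ restricted to $B_0$ (the balanced function relative to $B_0$), and let $S':=S\cap(B_1-x_0)$. We will compare the count $\langle 1_{S'},1_{A_0}\ast 1_{-A_0}\rangle$ with its ``expected'' value. First we replace $1_A$ by $\alpha 1_{B_0}$ in both slots. Since $\mu_G(B_0+B_1)\leq (1+\refconstlittle{e}\alpha)\mu_G(B_0)$ and $S'\subset B_1-x_0$ is essentially a translate of $B_1$, we get
\[
\langle 1_{S'},\alpha^2 1_{B_0}\ast 1_{-B_0}\rangle \geq (1-O(\refconstlittle{e}\alpha))\alpha^2\sigma\mu_G(B_1)\mu_G(B_0).
\]
The cross terms involving one copy of $f$ are handled similarly: $1_{S'}\ast 1_{B_0}$ is nearly constant ($\approx\sigma\mu_G(B_1)$) on most of $B_0$, so these terms contribute at most $O(\refconstlittle{e}\alpha)\cdot\alpha^2\sigma\mu_G(B_1)\mu_G(B_0)$, using $\int f=0$. (We may need $x_0\in B_1$ and $B_1$ symmetric here so that the shifted copies remain inside $B_0+B_1$.) Choosing $\refconstlittle{e}$ small, if (i) fails then the $f\ast\wt f$ term must be large:
\[
|\langle 1_{S'},f\ast\wt{f}\rangle|\geq \tfrac14\sigma\alpha^2\mu_G(B_1)\mu_G(B_0).
\]

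Next, pass to Fourier space. By Parseval this term is $\sum_\gamma \wh{1_{S'}}(\gamma)|\wh f(\gamma)|^2$ up to normalisation. Since $\sum|\wh f|^2\leq \alpha\mu_G(B_0)$, the bound on $f\ast\wt f$ says that the set $\Delta$ of $\gamma$ with $|(1_{S'}\dd\mu_{B_1-x_0})^\wedge(\gamma)|\geq \frac18\alpha\,\mu_{B_1-x_0}(S')$ carries a proportion $\Omega(\alpha)$ of the mass of $|\wh f|^2$. Now apply Lemma \ref{lem.localbessel} with the Lemma's $B_0,B_1,B_2$ equal to our $B_1-x_0,B_2,B_3$, with $\epsilon=\alpha/8$, $\sigma$ as given, $\eta=\refconstlittle{f}(\alpha\sigma)^{\refconstbig{g}}$, and width $\delta$ a small absolute constant. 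This produces a Bohr set $B_4$ with $O(\alpha^{-2}\sigma^{-1})$ frequencies and width $\Omega(1)$ such that $|1-\langle x,\gamma\rangle|\leq 1/100$ for all $x\in(B_3-B_3)\cap B_4$ and $\gamma\in\Delta$. Here $\refconstbig{g}$ is chosen so that $\refconstbig{12}\eta\epsilon^{-3}\sigma^{-3/2}$ is tiny.

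Finally, for $\mu$ supported on $B_1\cap(B_3-B_3)\cap B_4$ we have $|\wh\mu(\gamma)-1|\leq 1/100$ on $\Delta$. Hence
\[
\langle f\ast\wt f,\mu\ast\wt\mu\rangle=\sum|\wh f|^2|\wh\mu|^2\gg \alpha\cdot\alpha\mu_G(B_0),
\]
that is, $\|f\ast\mu\|_{L_2}^2\gg\alpha^2\mu_G(B_0)$. Since $\mu$ is supported on $B_1$ and $B_0$ is $B_1$-regular, $f\ast\mu$ has mean close to $0$ (its error is $O(\refconstlittle{e}\alpha^2)$). The standard $L_2$-to-$L_\infty$ step, writing $\|f\ast\mu\|_2^2\leq\|f\ast\mu\|_\infty\cdot\|f\ast\mu\|_1$ and using the positive-part trick with mean near zero, then gives a point where $1_A\ast\mu\geq(1+\refconstlittle{inc})\alpha$, which is (ii).

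The main obstacle is the first step: keeping the main and cross-term errors at size $O(\refconstlittle{e})\alpha^2\sigma$ while $S$ lives on the translated set $B_1-x_0$ rather than on $B_0$. This requires careful use of the $(1+\refconstlittle{e}\alpha)$ regularity and the fact that $x_0\in B_1$.
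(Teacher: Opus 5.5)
Up to and including the application of Lemma \ref{lem.localbessel}, your argument is the paper's. You use the same splitting $1_A=\alpha 1_{B_0}+f$, the same spectrum $\Delta$ at threshold $\frac18\alpha\sigma$ carrying $\Omega(\alpha^2)\mu_G(B_0)$ of the mass of $|\wh{f}|^2$, and the same parameters in the local Parseval lemma. (Your cross-term error is really $O(\refconstlittle{e})\sigma\alpha^2\mu_G(B_1)\mu_G(B_0)$, not $O(\refconstlittle{e}\alpha)\sigma\alpha^2\mu_G(B_1)\mu_G(B_0)$, since $\|f\|_\infty$ can be $1-\alpha$; this is harmless.)

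\textbf{The gap is in the last step.} Even a point $x$ with $f\ast\mu(x)\geq c\alpha$ does not give (ii). Indeed $1_A\ast\mu(x)=f\ast\mu(x)+\alpha 1_{B_0}\ast\mu(x)$, and this is at least $(1+c/2)\alpha$ only if $1_{B_0}\ast\mu(x)\geq 1-c/2$. Near the edge of $B_0$ the function $1_{B_0}\ast\mu$ takes values in $(0,1)$. The hypothesis $\mu_G(B_0+B_1)\leq(1+\refconstlittle{e}\alpha)\mu_G(B_0)$ controls $1-1_{B_0}\ast\mu$ on $B_0$ only on average, so nothing you say stops the large values of $f\ast\mu$ from sitting where $1_{B_0}\ast\mu$ is small. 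Relatedly, $f\ast\mu$ has mean exactly $0$. The function whose mean is $O(\refconstlittle{e}\alpha^2)\mu_G(B_0)$ is $g:=(1_A\ast\mu-\alpha)1_{B_0+B_1}$, and you seem to have conflated the two.

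\textbf{The paper's fix.} The paper avoids this by doing your first step pointwise rather than on average. For $x=s-t$ with $s,t\in B_1$ one has $\mu_G((s+B_0)\triangle(t+B_0))\leq 2(\mu_G(B_0+B_1)-\mu_G(B_0))$. Hence $\alpha 1_A\ast 1_{-B_0}+\alpha 1_{B_0}\ast 1_{-A}-\alpha^2 1_{B_0}\ast 1_{-B_0}\geq(1-\frac{1}{64})\alpha^2\mu_G(B_0)$ at every point of $B_1-B_1$. This bound is used twice:
\begin{enumerate}[label=(\alph*)]
\item on $S\cap(B_1-x_0)\subset B_1-B_1$, as you do;
\item on $\supp(\mu\ast\wt{\mu})\subset B_1-B_1$. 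Adding it to $\langle f\ast\wt{f},\mu\ast\wt{\mu}\rangle\geq\frac{1}{32}\alpha^2\mu_G(B_0)$ gives $\langle 1_A\ast 1_{-A},\mu\ast\wt{\mu}\rangle\geq(1+\frac{1}{64})\alpha^2\mu_G(B_0)$.
\end{enumerate}
The left-hand side of (b) is $\|1_A\ast\wt{\mu}\|_{L_2(\mu_G)}^2$, and $\|1_A\ast\wt{\mu}\|_{L_1(\mu_G)}=\alpha\mu_G(B_0)$. H\"older, together with the symmetry of $B_1\cap(B_3-B_3)\cap B_4$, then gives (ii) at once, with no positive-part argument and no boundary issue.

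\textbf{Repairing your route instead.} First transfer the $L_2$ bound to $g$ using
$\|g-f\ast\mu\|_{L_2(\mu_G)}^2=\alpha^2\|1_{B_0+B_1}-1_{B_0}\ast\mu\|_{L_2(\mu_G)}^2\leq\refconstlittle{e}\alpha^3\mu_G(B_0)$.
The positive-part argument then works for $g$, because $g\geq-\alpha$, $|\int g\dd\mu_G|\leq\refconstlittle{e}\alpha^2\mu_G(B_0)$, and $g$ is supported on a set of measure at most $(1+\refconstlittle{e}\alpha)\mu_G(B_0)$. Split according to whether $\max g\geq\alpha$. If not, then $\|g\|_\infty\leq\alpha$, so $\|g\|_{L_1(\mu_G)}\gg\alpha\mu_G(B_0)$, hence $\int g_+\dd\mu_G\gg\alpha\mu_G(B_0)$ and $\max g\gg\alpha$.
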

\begin{proof}
Let $\refconstlittle{f}$ and $\refconstbig{g}$ be absolute constants chosen so that
\begin{equation}\label{ey}
\refconstlittle{f}(\alpha \sigma)^{\refconstbig{g}} \leq \frac{1}{4} \cdot (8\alpha^{-1})^{-3}\cdot \sigma^{3/2} \cdot \refconstbig{12}^{-1} \text{ for all }\alpha,\sigma \in (0,1].
\end{equation}
The conclusions are monotonic in $A$, so we can certainly assume that $A \subset B_0$. For $x \in B_1-B_1$ we can write $x=s-t$ for $s,t \in B_1$ and hence
\begin{align*}
\alpha^21_{B_0} \ast 1_{-B_0}(x),\, &\alpha 1_{B_0} \ast 1_{-A}(x) , \text{ and }\alpha1_A \ast 1_{-B_0}(x)\\ & = \alpha^2 \mu_G(B_0)+ O\left(\alpha\mu_G((s+B_0) \triangle (t+B_0))\right).
\end{align*}
Now $\mu_G((s+B_0)\triangle (t+B_0)) \leq 2(\mu_G(B_1+B_0)-\mu_G(B_0))$, and so it follows that there is an absolute constant $\refconstlittle{e}>0$ such that if $\mu_G(B_0+B_1) \leq (1+\refconstlittle{e}\alpha)\mu_G(B_0)$ then 
\begin{align}
\nonumber \alpha 1_A \ast 1_{-B_0}(x) +\alpha 1_{B_0} \ast &1_{-A}(x) -\alpha^21_{B_0} \ast 1_{-B_0}(x)\\ & \label{eqn.ibv} \geq \left(1-\frac{1}{64}\right)\alpha^2\mu_G(B_0) \text{ for all }x \in B_1-B_1.
\end{align}
From (\ref{eqn.ibv}) if we are not in the first conclusion of the lemma then
\begin{equation*}
\left|\langle 1_S,(1_A - \alpha1_{B_0}) \ast (1_{-A} -\alpha1_{-B_0})\rangle_{L_2(\mu_G)}\right| > \left(\frac{1}{2}-\frac{1}{64}\right)\sigma \alpha^2\mu_G(B_1-x_0)\mu_G(B_0).
\end{equation*}
By Plancherel's theorem we have
\begin{equation*}
\sum_{\gamma \in \Gamma}{|(1_S\dd \mu_{B_1-x_0})^\wedge(\gamma)| |(1_A - \alpha1_{B_0})^\wedge(\gamma)|^2} \geq \frac{1}{4}\sigma \alpha^2\mu_G(B_0),
\end{equation*}
and
\begin{equation*}
\sum_{\gamma \in \Gamma}{ |(1_A - \alpha1_{B_0})^\wedge(\gamma)|^2} =\|1_A - \alpha1_{B_0}\|_{L_2(\mu_{G})}^2 = (\alpha-\alpha^2) \mu_G(B_0).
\end{equation*}
Hence, writing $\Delta:=\{\gamma: |(1_S\dd\mu_{B_1-x_0})^\wedge(\gamma)| \geq \frac{1}{8}\alpha\sigma\}$, we have
\begin{equation}\label{eqn.massfound}
\sum_{\gamma \in \Delta}{ |(1_A - \alpha1_{B_0})^\wedge(\gamma)|^2} \geq \frac{1}{8} \alpha^2\mu_G(B_0).
\end{equation}
Apply Lemma \ref{lem.localbessel} with the Lemma's $\epsilon$ equal to $\frac{1}{8}\alpha$; the Lemma's $\sigma$ equal to $\sigma$; the Lemma's $\delta$ equal to $\frac{1}{4}$; the Lemma's $\eta$ equal to $\refconstlittle{f}(\alpha \sigma)^{\refconstbig{g}} $; the Lemma's $B_0$ equal to $B_1-x_0$; the Lemma's $B_1$ equal to $B_2$; and the Lemma's $B_2$ equal to $B_3$. This gives us a Bohr set $B_4$ with a frequency set of size $O(\alpha^{-2}\sigma^{-1})$ and width $\frac{1}{4}$ such that 
\begin{equation*}
|1-\langle x,\gamma\rangle| \leq \refconstbig{12}\refconstlittle{f}(\alpha \sigma)^{\refconstbig{g}}\left(\frac{1}{8}\alpha\right)^{-3}\sigma^{-3/2} + \frac{1}{4} 
\end{equation*}
for all $\gamma \in \Delta$ and $x \in (B_3-B_3)\cap B_4$. From  (\ref{ey}) it follows that
\begin{equation*}
|1-\langle x,\gamma\rangle| \leq \frac{1}{2} \text{ for all }\gamma \in \Delta \text{ and } x\in (B_3-B_3)\cap B_4.
\end{equation*}
Suppose that $\mu$ is a probability measure supported on $B_1\cap (B_3-B_3)\cap B_4$. Then by the triangle inequality $|\wh{\mu}(\gamma)| \geq \frac{1}{2}$ for all $\gamma \in \Delta$. Hence from (\ref{eqn.massfound}) we have
\begin{equation*}
\sum_{\gamma \in \Gamma}{|\wh{\mu}(\gamma)|^2 |(1_A - \alpha1_{B_0})^\wedge(\gamma)|^2} \geq\frac{1}{32} \alpha^2\mu_G(B_0).
\end{equation*}
By Plancherel's theorem it then follows that
\begin{equation*}
\langle (1_A - \alpha1_{B_0})\ast (1_{-A}  - \alpha1_{-B_0}), \mu \ast \wt{\mu}\rangle \geq \frac{1}{32} \alpha^2 \mu_G(B_0).
\end{equation*}
Since the support of $\mu$ is contained in $B_1$, the support of $\mu \ast \wt{\mu}$ is contained in $B_1-B_1$, and hence by (\ref{eqn.ibv}) we have
\begin{equation*}
\langle 1_A \ast 1_{-A}, \mu \ast \wt{\mu}\rangle \geq \left(1+\frac{1}{64}\right) \alpha^2 \mu_G(B_0).
\end{equation*}
Since $\int{1_A\ast \mu \dd\mu_G} = \alpha \mu_G(B_0)$ this gives the second conclusion by the triangle inequality.
\end{proof}

With this iteration lemma we can prove our main result of the section:

\begin{theorem}\label{thm.maincore}
Suppose that $\phi:G \rightarrow G$ is an automorphism and $G$ is $r$-coloured. Then there are at least $\exp(-r^{O(1)})|G|^2$ monochromatic triples $(x,y,z)$ with $\phi(x)=y-z$.
\end{theorem}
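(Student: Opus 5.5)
The plan is a density-increment iteration on Bohr sets, in the style of the Cwalina–Schoen and Chapman–Prendiville arguments, driven by Lemma \ref{lem.itlem}.

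\textbf{Two reductions.} First, the equation $\phi(x)=y-z$ is invariant under $(y,z)\mapsto(y+u,z+u)$. So if $C$ is a colour class, $A:=(C-u)\cap B_0$ and $S:=\phi(C)\cap(B_1-x_0)$, then
\[
\langle 1_S,1_A\ast 1_{-A}\rangle_{L_2(\mu_G)}
\]
counts, up to normalisation, monochromatic triples of colour $C$. Conclusion (i) of Lemma \ref{lem.itlem} then gives at least $\tfrac12\sigma\alpha^2\mu_G(B_1)\mu_G(B_0)|G|^2$ triples. Second, $\phi$ is an automorphism, so $\phi^{-1}(B(\Lambda,\delta))=B(\Lambda\circ\phi,\delta)$ is again a Bohr set of the same rank and width, where $\Lambda\circ\phi$ denotes the characters $x\mapsto\langle\phi(x),\lambda\rangle$. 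Hence at every stage I may enlarge frequency sets to be closed under pulling back by $\phi$ (and $\phi^{-1}$ as needed), at the cost of doubling the rank. Then "$\phi(C)$ is dense in a translate of a Bohr set" and "$C$ is dense in a translate of a Bohr set" are interchangeable statements.

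\textbf{The iteration.} I maintain a Bohr set $B(\Lambda,\delta)$ and a set $\mathcal{C}$ of ``live'' colours. The invariant is that every colour outside $\mathcal{C}$ has density at most $1/(4r)$ (say) on every translate of the current Bohr set, after accounting for the $\phi$-pullback.
\begin{enumerate}[label=(\alph*)]
\item Use Lemma \ref{lem.regular} repeatedly to produce a chain $B_0\supset B_1\supset B_2\supset B_3$ of Bohr sets on $\Lambda$, with the nested regularity conditions required by Lemma \ref{lem.itlem}. Each step multiplies the width by $(\alpha\sigma)^{O(1)}/d$.
\item Since the colours cover $\phi^{-1}(B_1-x_0)$ for any $x_0$, some colour $C$ has $\phi(C)$ of density at least $1/r$ in $B_1-x_0$. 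By the invariant this $C$ is live.
\item By averaging over translates, and using that $B_1$ is tiny compared with $B_0$ together with the regularity of $B_0$, pick $u$ so that $C-u$ has density $\alpha\gtrsim 1/r$ in $B_0$. Here I take $u$ to maximise this density over translates, up to a factor $(1+c_{\ref{const:inc}}/2)$.
\item Apply Lemma \ref{lem.itlem}. In case (i) we are done. In case (ii), pass to the Bohr set $B_1\cap(B_3-B_3)\cap B_4$, regularised inside $B_1\cap B_3\cap B_4$ with widths adjusted, and replace $\mu$ by the uniform measure on it. Then $C$ has density at least $(1+c_{\ref{const:inc}})\alpha$ on some translate of a Bohr set of rank increased by $O(r^{3})$ and width shrunk by $r^{-O(1)}$.
\end{enumerate}
A colour's density can only grow by a factor $1+c_{\ref{const:inc}}$ a total of $O(\log r)$ times before exceeding $1$. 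So after $O(r\log r)$ steps case (i) must occur.

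\textbf{Quantitative bookkeeping.} The rank stays $d=r^{O(1)}$ and the width stays $\exp(-r^{O(1)})$, so Lemma \ref{lem.bohrsize} gives $\mu_G(B_0),\mu_G(B_1)\ge\exp(-r^{O(1)})$. Conclusion (i) then yields $\exp(-r^{O(1)})|G|^2$ triples.

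\textbf{Main obstacle.} The hardest step is (b)–(c): ensuring that the \emph{same} colour is dense both in the $x$-role (through $\phi(C)$ in a translate of $B_1$) and in the $y,z$-role (in a translate of $B_0$), while keeping the density-increment potential monotone. Colour-by-colour increments do not obviously combine. My first attempt is to use the potential $\sum_{C}\sup_u\mu_{u+B}(C)$ over live colours. Call a colour dead once its density falls below $1/(4r)$ on every translate; after intersecting with a sub-Bohr set it stays dead by regularity. The densities of live colours on a translate sum to at least $1-1/4$, so some live colour is dense enough in both roles. I would then check that each application of case (ii) increases this potential by $\Omega(1/r)$, which bounds the number of steps by $r^{O(1)}$ and suffices for the stated bound.
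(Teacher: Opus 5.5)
There is a genuine gap in steps (b)--(c), exactly at the obstacle you flag, and the proposed fix rests on a false claim: regularity does \emph{not} keep dead colours dead. Suppose $B'\subset B$ with $B$ regular relative to $B'$. Then $\mu_{x+B}$ is approximately the average of $\mu_{y+B'}$ over $y\in x+B$. So the maximal density of a colour over translates essentially cannot \emph{decrease} on passing to $B'$, but it can increase drastically. For instance, take $C=\phi^{-1}(B_1)$, with the other colours covering the rest. Then $\mu_{B_1}(\phi(C))=1$, yet the density of $C$ (or of $\phi(C)$) on every translate of the current Bohr set $B$ is at most $\mu_G(B_1)/\mu_G(B)$, which can be far below $1/4r$. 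So the colour selected in (b) need not be live. In that case nothing gives $\alpha\gtrsim 1/r$ on a translate of $B_0$, because density on the small set $B_1$ cannot be averaged up to the larger set $B_0$. You also cannot sidestep this by translating the $x$-role, or by choosing the colour via the $(y,z)$-role instead. Lemma \ref{lem.itlem} requires $x_0\in B_1$, because (\ref{eqn.ibv}) only controls $1_A\ast 1_{-A}$ on $B_1-B_1$. Hence $\phi(C)$ must be dense at the origin, and only $y,z$ may be translated.

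What is missing is a separate step for when the colour chosen by the pinned $x$-role has small $(y,z)$-density, together with an exactly monotone potential. The paper uses $S_{i,j}:=\min\{\|1_{A_j}\ast\mu\|_\infty:\mu\text{ a probability measure supported in }B^{(i)}\}$. This cannot decrease as the Bohr sets shrink, and it satisfies $S_{i,j}\leq\|1_{A_j}\ast\mu_{B_0}\|_\infty$ for every $B_0\subset B^{(i)}$.

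The paper picks $j$ with $\mu_{B_1}(\phi(A_j))\geq 1/r$, and then splits into two cases.
\begin{itemize}
\item If $S_{i,j}\geq 1/2r$, then $\alpha\geq 1/2r$ and Lemma \ref{lem.itlem} applies.
\item If $S_{i,j}<1/2r$, it passes to $B^{(i+1)}$, whose frequency set includes one further $\phi_*$-iterate, so that $\phi(B^{(i+1)})\subset B_2$. Let $\mu$ be any probability measure on $B^{(i+1)}$ and $\nu$ its pushforward under $\phi$. The regularity of $B_1$ relative to $B_2$ gives $\|1_{A_j}\ast\mu\|_\infty=\|1_{\phi(A_j)}\ast\nu\|_\infty\geq\mu_{B_1}(\phi(A_j))-1/2r\geq 1/2r$.
\end{itemize}
This crossing of the threshold $1/2r$ happens at most once per colour, so there are $O(r\log 2r)$ steps in all.

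Relatedly, you cannot close a frequency set under $\phi$ for the price of doubling its rank, since a $\phi_*$-orbit can have size comparable to $|G|$. Instead one adds one iterate per step, via $\wt{\Lambda_i}=\Lambda_i\cup\phi_*(\Lambda_i)\cup\dots\cup\phi_*^{i-1}(\Lambda_i)$, so that the rank stays at most $id_i=r^{O(1)}$.
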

\begin{proof}
First we set some notation. Write $A_1,\dots,A_r$ for the colour classes, and for $\gamma \in \Gamma$ we write $\phi_*(\gamma)$ for the element of $\Gamma$ corresponding to the homomorphism $G \rightarrow S^1; x\mapsto \langle \phi(x),\gamma\rangle$.

We proceed iteratively to construct sets $\Lambda_1 \subset \dots \subset \Gamma$ of sizes $d_1 \leq d_2\leq \dots$ respectively, and reals $1 \geq \delta_1 \geq \delta_2 \geq \dots >0$. We put
\begin{equation*}
B^{(i)}:=\bigcap_{j=0}^{i-1}{\phi^{-j}(B(\Lambda_i,\delta_i))} = B(\wt{\Lambda_i},\delta_i) \text{ where }\wt{\Lambda_i}:=\Lambda_i\cup \phi_*(\Lambda_i) \cup \cdots \cup \phi_*^{i-1}(\Lambda_i).
\end{equation*}
In particular,
\begin{equation*}
\wt{\Lambda_1}\subset \wt{\Lambda_2}\subset \dots \subset \Gamma \text{ and }|\wt{\Lambda_i}| \leq id_i.
\end{equation*}
We initialise with $\Lambda_1:=\{1_\Gamma\}$, $d_1:=1$, and $\delta_1:=1$. For each step $i$ of the iteration, and $j \in [r]$ write
\begin{equation*}
S_{i,j}:=\min{\{\|1_{A_j} \ast \mu\|_{\infty} : \mu \text{ is a probability measure supported in } B^{(i)}\}}.
\end{equation*}
From the monotonicity of the $\delta_i$s and the nesting of the $\wt{\Lambda_i}$s, the Bohr sets $B^{(i)}$ are nested, and hence
\begin{equation}\label{mon2}
S_{i,j} \geq S_{k,j}\text{ whenever }i \geq k, \text{ and } S_{i,j} \leq 1 \text{ for all }i.
\end{equation}
At step $i$ we shall show that either
\begin{enumerate}
\item\label{cd1} there is some $j \in [r]$ such that
\begin{equation*}
\langle 1_{A_j} \ast 1_{-A_j},1_{\phi( A_j)}\rangle_{L_2(\mu_G)} \geq \exp(-r^{O(1)});\text{ or }
\end{equation*}
\item \label{cd0} there is $j \in [r]$ such that
\begin{equation*}
 d_{i+1} \leq d_i + r^{O(1)}\text{ and }\delta_{i+1} \geq  (1/2id_ir)^{O(1)} \delta_i
\end{equation*}
and for which
\begin{enumerate}
\item\label{cd22} $S_{i+1,j} \geq (1+\Omega(1))S_{i,j}$ and $S_{i,j} \geq 1/2r$;
\item\label{cd23} or $S_{i,j} < 1/2r$ and $S_{i+1,j}\geq 1/2r$.
\end{enumerate}
\end{enumerate}
We stop the iteration the first time we are in case (\ref{cd1}). By monotonicity (\ref{mon2}) we can be in case (\ref{cd23}) at most once for each $j \in [r]$. Suppose that we are in case (\ref{cd22}) for a particular $j$ at steps $i_1 <i_2<\dots < i_k$ of the iteration. Then by monotonicity (\ref{mon2}) we have
\begin{align*}
1 \geq S_{i_k+1,j} \geq (1+\Omega(1))S_{i_k,j} & \geq (1+\Omega(1))S_{i_{k-1}+1,j}\\ & \geq \qquad \dots \qquad  \geq (1+\Omega(1))^kS_{i_1,j} \geq (1+\Omega(1))^k\cdot (1/2r).
\end{align*}
It follows that $k=O(\log 2r)$. Since there are $r$ possible values for $j$ we conclude that we must have terminated the iteration at step $i_0\leq r\cdot (1+O(\log 2r))$, in which case
\begin{equation}\label{eqn.bounds}
|\wt{\Lambda_{i_0}}| \leq i_0d_{i_0} =i_0^2\cdot r^{O(1)} = r^{O(1)} \text{ and }\delta_{i_0} = \prod_{i<i_0}{\left(\frac{1}{rid_ir}\right)^{O(1)}}=\exp(-r^{O(1)})
\end{equation}
by (\ref{cd0}) and the fact that $\delta_1=\Omega(1)$.

Since $|G|^2\langle 1_{A_j} \ast 1_{-A_j},1_{\phi(A_j)}\rangle_{L_2(\mu_G)} $ is exactly the number of solutions to $\phi(x)=y-z$ with $x,y,z \in A_j$ we have the conclusion of the Theorem from (\ref{cd1}).

It remains to show that at each stage of the iteration we are either in case (\ref{cd1}), (\ref{cd22}) or (\ref{cd23}). Suppose we are at step $i$. By Lemma \ref{lem.regular} there is a Bohr set $B_0 \subset B^{(i)}$ of width $\Omega(\delta_i)$ and frequency set $\wt{\Lambda_i}$, and another $B_1'$ of width $\Omega(\delta_i/rid_i)$ and frequency set $\wt{\Lambda_i}$ such that
\begin{equation*}
\mu_G(B_0+B_1')\leq \left(1+\frac{\refconstlittle{e}}{2r}\right)\mu_G(B_0) \text{ and }B_1' \subset B_0.
\end{equation*}
By Lemma \ref{lem.regular} again there is a Bohr set $B_1\subset B_1'$ of width $\Omega(\delta_i/rid_i)$ and frequency set $\wt{\Lambda_i}$, and another $B_2'$ of width $\Omega(\delta_i/r^2(id_i)^2)$ and frequency set $\wt{\Lambda_i}$ such that
\begin{equation}\label{eqn.u}
\mu_G(B_1+B_2')\leq \left(1+\frac{1}{4r}\right)\mu_G(B_1) \text{ and }B_2' \subset B_1.
\end{equation}
Finally, by Lemma \ref{lem.regular} again there is a Bohr set $B_2 \subset B_2'$ of width $\Omega(\delta_i/r^2(id_i)^2)$ and frequency set $\wt{\Lambda_i}$, and another $B_3$ of width $\Omega(\delta_i/r^{O(1)}(id_i)^3)$ and frequency set $\wt{\Lambda_i}$  such that
\begin{equation*}
\mu_G(B_2+B_3)\leq (1+\refconstlittle{f}(1/4r^2)^{\refconstbig{g}})\mu_G(B_2) \text{ and }B_3 \subset B_2.
\end{equation*}
By averaging there is some $j$ such that 
\begin{equation}\label{eqn.lowi}
\mu_G(\phi(A_j)\cap B_1)\geq \frac{1}{r}\mu_G(B_1).
\end{equation}

Suppose that $S_{i,j}<1/2r$. Let $\delta_{i+1}$ be the width of $B_2$ and $\Lambda_{i+1}$ be $\Lambda_i$. Suppose that $\mu$ has support in $B^{(i+1)}$. Write $\nu$ for the pushforward measure defined by $\nu(E):=\mu(\phi^{-1}(E))$ for all $E\subset G$. Then
\begin{equation*}
\supp \nu \subset \phi(\supp \mu)\subset \bigcap_{j=-1}^{i-1}{\phi^{-j}(B(\Lambda_i,\delta_{i+1}))} \subset B(\wt{\Lambda_i},\delta_{i+1})=B_2.
\end{equation*}
Hence if $x$ is in the support of $\nu$, then by (\ref{eqn.u}) and using that the identity is in $B_2$, 
\begin{equation*}
\|\mu_{B_1+x} - \mu_{B_1}\|\leq  \frac{\mu_{G}( (B_1+x) \triangle B_1)}{\mu_G(B_1)} \leq \frac{1}{2r}.
\end{equation*}
By the triangle inequality we conclude that $\|\mu_{B_1} \ast \nu - \mu_{B_1}\|\leq 1/2r$ and hence by (\ref{eqn.lowi}) we have
\begin{align*}
\|1_{A_j} \ast \mu\|_\infty =\|1_{\phi(A_j)} \ast \nu\|_\infty&\geq \|1_{\phi(A_j)} \ast \mu_{B_1}\ast \nu\|_\infty\\& \geq \|1_{\phi(A_j)} \ast \mu_{B_1}\|_\infty - \frac{1}{2r} =\mu_{-B_1}(\phi(A_j)) -\frac{1}{2r} \geq \frac{1}{2r}.
\end{align*}
It follows that we are in case (\ref{cd23}).

Otherwise $S_{i,j} \geq 1/2r$. Let $x_1 \in G$ be such that $\alpha:=\mu_{x_1+B_0}(A)=\mu_{x_1-B_0}(A)=\|1_{A_j} \ast \mu_{B_0}\|_\infty \geq S_{i,j} \geq 1/2r$.  Apply Lemma \ref{lem.itlem} with the Lemma's $\alpha$ equal to $\alpha$; the Lemma's $\sigma$ equal to $\mu_{B_1}(\phi(A_j))\geq 1/r $; the Lemma's $B_0$ equal to $x_1+B_0$; the Lemma's $B_1$ equal to $B_1$; the Lemma's $B_2$ equal to $B_2$; the Lemma's $B_3$ equal to $B_3$; the Lemma's $A$ equal to $A_j$; the Lemma's $x_0$ equal to the identity; and the Lemma's $S$ equal to $\phi(A_j)$. Then either we have
\begin{equation*}
\langle 1_{\phi(A_j)}, 1_{A_j} \ast 1_{-A_j}\rangle_{L_2(\mu_G)} \geq \frac{1}{8r^3}\mu_G(B_1)\mu_G(B_0).
\end{equation*}
Lemma \ref{lem.bohrsize} and (\ref{eqn.bounds}) then tells us that we are in case (\ref{cd1}). If we are not in the first case of Lemma \ref{lem.itlem} then we get a Bohr set $B_4$ with frequency set $\Lambda'$ of size $O(r^3)$ and width $\Omega(1)$ such that any probability measure $\mu$ supported on $B_1 \cap (B_3-B_3)\cap B_4$ has $\|1_{A_j} \ast \mu\|_\infty \geq (1+\Omega(1))S_{i,j}$. Let $B^{(i+1)}$ have $\Lambda_{i+1}=\Lambda_i\cup\Lambda'$, and $\delta_{i+1}$ be the minimum of the widths of $B_3$, $B_1$, and $B_4$. This ensures that $B^{(i+1)} \subset B_1 \cap (B_3-B_3)\cap B_4$ and $d_{i+1}$ and $\delta_{i+1}$ all satisfy (\ref{cd0}) and (\ref{cd22}). The result is proved.
\end{proof}
There is a toy version of Theorem \ref{thm.maincore} in which $G$ is taken to be the additive group of the field $\F_{2^n}$. In this case one could choose $\phi(x)=x^2$ (where the multiplication is in $\F_{2^n}$) which is an automorphism of the additive group, and so we find that any $r$-colouring has $\exp(-r^{O(1)})4^n$ monochromatic triples $(x,y,z)$ with $x^2=y+z$. This was proved for finite fields of prime order in \cite{lin::2}, and is quite different to the analogous problem in the natural numbers resolved in \cite{grelin::}.

The advantage of toy versions of results like this is that they are simpler because they do not need Bohr sets (except in the sense that every subgroup is a Bohr set), and do not need any of localisation arguments. There is then a general method with its origins in \cite{bou::5} by which they can be converted to the non-toy setting, and this decoupling can help make the arguments easier to understand. For an introduction to these sorts of toys see the series \cite{gre::9,wol::3,Peluse:2024aa}.

Finally we turn to the proofs of Theorems \ref{thm.main2} \& \ref{thm.main3}.
\begin{theorem}[Theorem \ref{thm.main2}]
Suppose that $p$ is prime, $A$ is a $1 \times m$ matrix that satisfies the columns condition over $\Z/p\Z$, and $(\Z/p\Z)^*$ is $r$-coloured such that there is no monochromatic solution to $Ax=0$. Then $p=\exp((2r)^{O(1)})$.
\end{theorem}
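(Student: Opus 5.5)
The plan is to reduce to Theorem \ref{thm.maincore} with $G=\Z/p\Z$ and $\phi$ given by multiplication by a non-zero scalar. To handle the fact that only $(\Z/p\Z)^*$ is coloured, we will give $0$ an extra colour of its own.

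Write $A=(a_1,\dots,a_m)$. The columns condition over $\Z/p\Z$ gives a partition $P_1,\dots,P_d$ of $[m]$ with $\sum_{i\in P_1}a_i=0$; the later conditions will not be needed. There are three cases.
\begin{enumerate}[label=(\roman*)]
\item If all $a_i=0$, or more generally if $\sum_{i=1}^m a_i=0$, then the constant vector $x=(1,\dots,1)$ is a monochromatic solution. This contradicts the hypothesis, so this case does not occur.
\item Otherwise some $a_i$ is non-zero. If every $a_i$ with $i\in P_1$ were zero, then $V_1$ would be the zero space, and the condition $\sum_{i\in P_2}a_i\in V_1$ would propagate this to every later part, forcing all $a_i=0$. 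That is the first case, so some $i_0\in P_1$ has $a_{i_0}\neq 0$.
\item Put $s:=\sum_{i\notin P_1}a_i$. Since $\sum_{i\in P_1}a_i=0$ and the total sum is non-zero, we get $s\neq 0$.
\end{enumerate}

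Now, given $x,y,z\in\Z/p\Z$, set $x_{i_0}:=z$, set $x_i:=y$ for $i\in P_1\setminus\{i_0\}$, and set $x_i:=x$ for $i\notin P_1$. Using $\sum_{i\in P_1\setminus\{i_0\}}a_i=-a_{i_0}$, we get
\begin{equation*}
Ax=a_{i_0}(z-y)+sx.
\end{equation*}
So $Ax=0$ exactly when $\phi(x)=y-z$, where $\phi(u):=a_{i_0}^{-1}s\,u$. Since $a_{i_0}^{-1}s\neq0$, $\phi$ is an automorphism of the additive group $G=\Z/p\Z$. Moreover, if $x,y,z$ all share a colour then every entry of the resulting vector has that colour.

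Extend the $r$-colouring of $(\Z/p\Z)^*$ to an $(r+1)$-colouring of $G$ by giving $0$ a new colour. Theorem \ref{thm.maincore} then yields at least $\exp(-(r+1)^{O(1)})p^2$ monochromatic triples $(x,y,z)$ with $\phi(x)=y-z$. At most one of these, namely $(0,0,0)$, lies in the new colour class. Any other monochromatic triple has $x,y,z$ in a single colour class of $(\Z/p\Z)^*$, and so, by the construction above, gives a monochromatic solution of $Ax=0$, which is excluded. Hence
\begin{equation*}
\exp(-(r+1)^{O(1)})p^2\leq 1,
\end{equation*}
and therefore $p\leq \exp((r+1)^{O(1)})=\exp((2r)^{O(1)})$.

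The only genuinely delicate point is making sure the reduction to a single Schur-type equation $\phi(x)=y-z$ is always available. This needs two facts: that $P_1$ contains a non-zero coefficient, and that the weight $s$ outside $P_1$ is non-zero. The degenerate case $s=0$ is exactly the case where constant vectors are solutions, so it is excluded directly. Everything else is absorbed by Theorem \ref{thm.maincore}, which is a counting statement and so easily absorbs the single trivial triple coming from the colour of $0$.
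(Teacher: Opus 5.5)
Your reduction is the same as the paper's. You pick a zero-sum block of coefficients containing some $a_{i_0}\neq 0$, substitute so that $Ax=0$ becomes $\phi(x)=y-z$ with $\phi$ multiplication by $a_{i_0}^{-1}s$, give $0$ its own colour, and apply Theorem \ref{thm.maincore} to get $\exp(-(r+1)^{O(1)})p^2\leq 1$. The substitution, the final count, and your handling of the case $\sum_i a_i=0$ via the constant vector $(1,\dots,1)$ are all correct.

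The one step that fails as written is (ii). If every $a_i$ with $i\in P_1$ vanishes, then $V_1=\{0\}$. The columns condition then gives only $\sum_{i\in P_2}a_i=0$, not $a_i=0$ for each $i\in P_2$, so nothing propagates. For a concrete counterexample, take $A=(0,1,-1,1)$ with $P_1=\{1\}$, $P_2=\{2,3\}$, $P_3=\{4\}$. This satisfies the columns condition and has $\sum_i a_i=1\neq 0$, yet $P_1$ contains no non-zero coefficient, so your $i_0$ does not exist.

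The repair is short. Let $j$ be minimal such that $P_j$ contains a non-zero coefficient; such $j$ exists because not all $a_i$ vanish. Then $V_{j-1}=\{0\}$, so $\sum_{i\in P_j}a_i=0$. Your argument now goes through verbatim with $P_j$ in place of $P_1$, and $s=\sum_{i\notin P_j}a_i=\sum_i a_i\neq 0$.

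This fix uses the later parts of the columns condition, contrary to your remark that they are not needed. They genuinely are needed: for $A=(0,1)$ the block $\{1\}$ has zero sum, but $Ax=0$ has no solution in $(\Z/p\Z)^*$ for any $p$.

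The paper simply asserts that the columns can be ordered with an initial segment summing to $0$ and $a_1\neq 0$. The minimal-$j$ argument above is exactly the justification for that ordering.
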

\begin{proof}
Write $A=(\begin{array}{ccc}a_1 & \cdots  & a_m\end{array})$ with the entries (columns) ordered so that there is some $r \in \N^*$ such that $a_1+\cdots + a_r=0$ and $a_1 \neq 0$. Let $b:=-(a_{r+1}+\cdots + a_m)$. The hypothesis on the colouring means that there are no monochromatic solutions to $a_1x-a_1y = bz$, and hence no monochromatic solutions to $x-y=a_1^{-1}bz$. If $b=0$ then the lack of monochromatic solutions means that the colouring must be empty which is a contradiction. Hence $b \neq 0$. Let $G=\Z/p\Z$, $\phi(x)=a_1^{-1}bx$ which is an automorphism, and apply Theorem \ref{thm.maincore} to $G$ coloured with the $r$-colouring of $(\Z/p\Z)^*$ and an additional colour for $0$. Then by hypothesis the only monochromatic solution is $x=y=z=0$ and the Theorem tells us that $\exp(-(r+1)^{O(1)})|G|^2\leq 1$. Since $|G|=p$ this rearranges to give the conclusion.
\end{proof}

\begin{theorem}[Theorem \ref{thm.main3}]
Suppose that $a, N \in \N$ are coprime, and $(\Z/N\Z)^*$ is $r$-coloured such that there is no monochromatic solution to $ax_1+x_2-x_3=0$. Then $N=\exp((2r)^{O(1)})$.
\end{theorem}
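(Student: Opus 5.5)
The plan is to mirror the proof of Theorem \ref{thm.main2}: take $G=\Z/N\Z$ and $\phi(x)=ax$. Since $(a,N)=1$, multiplication by $a$ is a bijection of $\Z/N\Z$ and a group homomorphism, so $\phi$ is an automorphism of $G$. The equation $ax_1+x_2-x_3=0$ is exactly $\phi(x_1)=x_3-x_2$, which is the triple equation $\phi(x)=y-z$ of Theorem \ref{thm.maincore} with $(x,y,z)=(x_1,x_3,x_2)$.

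Next, I extend the given $r$-colouring of $(\Z/N\Z)^*$ to an $(r+1)$-colouring of all of $G$ by giving $0$ a new colour of its own. A monochromatic triple $(x,y,z)$ with $\phi(x)=y-z$ then falls into one of two cases.
\begin{itemize}
\item If it lies in one of the original colour classes, then all its entries are non-zero and of the same colour. This would give a monochromatic solution to $ax_1+x_2-x_3=0$ in $(\Z/N\Z)^*$, which the hypothesis rules out.
\item Otherwise it lies in the class $\{0\}$, so it is the triple $(0,0,0)$.
\end{itemize}
Hence there is at most one monochromatic triple.

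Theorem \ref{thm.maincore}, applied with $r+1$ colours, then gives $\exp(-(r+1)^{O(1)})N^2\leq 1$. Rearranging, $N\leq \exp((r+1)^{O(1)})$. Since $r+1\leq 2r$, this is $N=\exp((2r)^{O(1)})$, as required.

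There is no real obstacle here. The only points to check are that coprimality makes $\phi$ an automorphism, and that the variables of the equation are matched correctly to the form $\phi(x)=y-z$. The degenerate case $N=1$ is trivially covered by the bound.
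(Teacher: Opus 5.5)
Your proposal is correct and is essentially the paper's proof: coprimality makes $\phi(x)=ax$ an automorphism, giving $0$ its own colour leaves $(0,0,0)$ as the only monochromatic triple, and Theorem \ref{thm.maincore} with $r+1$ colours gives $\exp(-(r+1)^{O(1)})N^2\leq 1$. Your matching $(x,y,z)=(x_1,x_3,x_2)$ is in fact slightly more careful than the paper, which writes $\phi(x_1)=x_2-x_3$; this is harmless there, since swapping $x_2$ and $x_3$ preserves monochromaticity.
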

\begin{proof} Since $a$ and $N$ are coprime the map $\phi:\Z/N\Z \rightarrow \Z/N\Z;x \mapsto ax$ is an automorphism. As above, colour $\Z/N\Z$ by taking the colouring of $(\Z/N\Z)^*$ and adding an extra colour class for $0$. We know that there is $1$ monochromatic solution to $\phi(x_1)=x_2-x_3$ or, equivalently, to $ax_1=x_2-x_3$, and by Theorem \ref{thm.maincore} this is at least $\exp(-(r+1)^{O(1)})N^2$. This rearranges to give the result.
\end{proof}

\bibliographystyle{halpha}

\bibliography{references}

\end{document}